\documentclass[12pt]{article}

\usepackage{latexsym}
\usepackage{amssymb}
\usepackage{graphicx}
\usepackage{color}
\usepackage{amsmath}
\usepackage{mathrsfs} %fonts veramente calligrafici con \mathscr{B}
\usepackage{booktabs}

\newtheorem{theorem}{Theorem}

\newtheorem{corollary}[theorem]{Corollary}

\newtheorem{definition}[theorem]{Definition}

\newtheorem{lemma}[theorem]{Lemma}

\newenvironment{proof}[1][Proof]{\textbf{#1.} }{\ \rule{0.5em}{0.5em}}
\newcommand{\kom}[1]{}
\renewcommand{\kom}[1]{{\bf [#1]}}
\definecolor{blau}{rgb}{0.1,0.0,0.9}

\newcounter{komcounter}
\numberwithin{komcounter}{section}

\title{Sharp decay estimates for subsolutions of fully
nonlinear equations in thin domains}
\date{}
\author{JOHN FABRICIUS {\small{$^1$}} and NIKLAS L.P. LUNDSTR\"{O}M {\small{$^2$}}}

\date{\today}

\begin{document}

\maketitle

\begin{center}
{\small{$^1$}}
{\it \small Department of Engineering Sciences and Mathematics},\\ 
{\it \small Lule{\aa} University of Technology, SE-97187 Lule{\aa}, Sweden\/{\rm ;}}\\  
{\it \small john.fabricius@ltu.se}\\
{\small{$^2$}}
{\it \small Department of Mathematics and Mathematical Statistics},\\
{\it \small Ume{\aa} University, SE-90187 Ume{\aa}, Sweden\/{\rm ;}}\\
{\it \small niklas.lundstrom@umu.se}\\
\end{center}

\maketitle

%%%%%%%%%%%%%%%%%%%%%%%%%%%%%%%%%%%%%%%%%%%%
%%%%%%%%%%%%%%%%%%%%%%%%%%%%%%%%%%%%%%%%%%%%
%%%%%%%%%%%%%%%%%%%%%%%%%%%%%%%%%%%%%%%%%%%%
%%%%%%%%%%%%%%%%%%%%%%%%%%%%%%%%%%%%%%%%%%%%
%%%%%%%%%%%%%%%%%%%%%%%%%%%%%%%%%%%%%%%%%%%%
%%%%%%%%%%%%%%%%%%%%%%%%%%%%%%%%%%%%%%%%%%%%
%%%%%%%%%%%%%%%%%%%%%%%%%%%%%%%%%%%%%%%%%%%%
%%%%%%%%%%%%%%%%%%%%%%%%%%%%%%%%%%%%%%%%%%%%
%%%%%%%%%%%%%%%%%%%%%%%%%%%%%%%%%%%%%%%%%%%%
%%%%%%%%%%%%%%%%%%%%%%%%%%%%%%%%%%%%%%%%%%%%

\begin{abstract}
We reduce the problem of establishing pointwise upper decay estimates in thin domains for viscosity subsolutions of fully nonlinear partial differential equations to solving a one-dimensional differential inequality. The resulting estimates make explicit the interaction between the transverse geometry of the domain and the ellipticity of the equation, and are shown to be sharp. Our framework allows the ellipticity to degenerate at infinity and accommodates nonhomogeneous equations with lower-order terms. As an application, we obtain decay estimates for $p$-subharmonic functions when $1<p\leq 2$.\\

\noindent
{\em Mathematics Subject Classification: 35B40, 35B50, 35B53, 35J25, 35J60, 35J70, 42B37}\\
%
% 35B53: "Liouville theorems and Phragmén-Lindelöf theorems in context of PDEs" 
%
% 35J25: "Boundary value problems for second-order elliptic equations" 
%
% 35J60: "Nonlinear elliptic equations" 
%
% 35B50: "Maximum principles in context of PDEs"
%
% 35J70: "Degenerate elliptic equations" 
%
% 35J70: "Degenerate elliptic equations" (MSC2020)
%
% 42B37: "Harmonic analysis and PDEs" (MSC2020)
%

\noindent
{\it Keywords:} Phragm\'en--Lindel\"of principle; Fully nonlinear elliptic equations; Degenerating ellipticity; Pointwise decay; Thin domains; Boundary layers; Saint-Venant principle; $p$-Laplace equation.
\end{abstract}

%%%%%%%%%%%%%%%%%%%%%%%%%%%%%%%%%%%%%%%%%%%%
%%%%%%%%%%%%%%%%%%%%%%%%%%%%%%%%%%%%%%%%%%%%
%%%%%%%%%%%%%%%%%%%%%%%%%%%%%%%%%%%%%%%%%%%%
%%%%%%%%%%%%%%%%%%%%%%%%%%%%%%%%%%%%%%%%%%%%
%%%%%%%%%%%%%%%%%%%%%%%%%%%%%%%%%%%%%%%%%%%%
%%%%%%%%%%%%%%%%%%%%%%%%%%%%%%%%%%%%%%%%%%%%
%%%%%%%%%%%%%%%%%%%%%%%%%%%%%%%%%%%%%%%%%%%%
%%%%%%%%%%%%%%%%%%%%%%%%%%%%%%%%%%%%%%%%%%%%
%%%%%%%%%%%%%%%%%%%%%%%%%%%%%%%%%%%%%%%%%%%%
%%%%%%%%%%%%%%%%%%%%%%%%%%%%%%%%%%%%%%%%%%%%

\section{Introduction}

\setcounter{theorem}{0}
\setcounter{equation}{0}

\noindent
The classical Phragm\'{e}n--Lindelöf principle (PLP) gives decay estimates for subharmonic functions defined in a positive half-strip
\[
\Omega=\{(x_1,z)\in \mathbf R^2: x_1>0,\:\lvert z\rvert <M\}.
\]
Suppose %$u\in C(\overline{\Omega})$ 
$u$ is bounded, subharmonic in $\Omega$ and vanishes on $z=\pm M.$
%\begin{enumerate}
%\item[(i)] harmonic in $S$ and continuous on $\overline{S}$,
%\item[(ii)] bounded in the strip,
%\item[(iii)] vanishes on the lateral boundary, i.e. $u(x_1,\pm M)=0.$
%\end{enumerate}
Then $u$ decays exponentially as $x_1\to \infty.$
We prove this using an elementary barrier argument that already contains the main idea of the paper. 
For a suitable constant $A$, the function
$$
\hat u_L(x_1,z) = A \left(e^{- \sqrt\kappa x_1} + \frac{x_1}{L}\right) \cos\left( \sqrt\kappa z\right) 
\quad \text{with}\quad \kappa = \frac{\pi^2}{4 M^2}
$$
is strictly superharmonic in $\Omega_L = \Omega \cap\{1 < x_1 < L\}$ for each $L>1$. 
Thus, if it dominates $u$ on $\partial \Omega_L$, 
then by a well-known comparison principle it barriers $u$ from above in $\Omega_L$, and
taking the infimum over $L$ then implies the decay, since $\Omega_L \to \Omega\cap\{1 < x_1\}$ as $L \to \infty$ and
\begin{align}\label{eq:inf-intro}
u (x_1, z) \leq \inf_{L > x_1} \hat u_L(x_1,z) \leq A e^{- \sqrt\kappa x_1} \quad \text{whenever} \quad x_1 > 1, |z| < M.
%\text{in} \quad \Omega\cap\{1 < x_1\}.
\end{align}
Indeed, away from the inlet at $x_1 = 0$, it is well known that any subharmonic function with $u \leq 0$ on $|z| = M$ must vanish no slower than the distance to this part of the boundary (given by $|z| = M$), and thus $\hat u_L$ dominates $u$ on $\partial \Omega_L$ in the sense that for $A$ large enough, $\hat u_L \geq u$ on $\partial \Omega_L$.
Hence $u \leq  \hat u_L$ in $\Omega_L$, for any arbitrary $L > 1$. 
Therefore, we conclude validity of \eqref{eq:inf-intro} implying the exponential decay.
The rate is optimal, since the principal mode
$$
u(x_1,z) = e^{- \sqrt\kappa x_1} \cos\left( \sqrt\kappa z\right) 
$$
is a positive harmonic function in $\Omega$ that vanishes on its lateral boundary.
%As the harmonic function
%
%$$
%v(x_1,z) = e^{- \sqrt\kappa x_1} \cos\left( \sqrt\kappa z\right) 
%$$
%
%satisfies 0 on $\partial S \cap \{|z| = M\} \cap \{x_1 > 0\}$, is positive in $S$ and dying exponentially with exponent 5$\sqrt\kappa$, this exponent is sharp.

In this paper, we extend the above argument to viscosity subsolutions of fully nonlinear, possibly nonhomogeneous and degenerate elliptic equations in a broad class of thin domains which we call $(m,M)$-thin domains (see Definition \ref{def:narrowdomain}).  
%In this paper, we expand on the above argument to prove pointwise decay estimates for viscosity subsolutions of fully nonlinear nonhomogeneous degenerate elliptic PDEs in a class of thin domains which we call $(m,M)$-thin domains (see Definition \ref{def:narrowdomain}).  
In particular, we consider equations in nondivergence form,
\begin{align}\label{eq:main}
F(x,u,Du,D^2u) = 0,\tag{$\star$}
\end{align}
in $(m,M)$-thin domains $\Omega \subset \mathbb{R}^n$ for $n \geq 2$.
Here, $Du$ is the gradient and $D^2u$ the Hessian.
$F:\mathbb{R}^n \times \mathbb{R} \times \mathbb{R}^n  \times \mathbb{S}^n \rightarrow \mathbb{R}$ in which $\mathbb{S}^n$ is the set of symmetric $n \times n$ matrices equipped with the positive semi-definite ordering;
for $X, Y \in \mathbb{S}^n$, we write $X \leq Y$ if $\langle (X - Y) \xi, \xi \rangle \leq 0$ for all $\xi \in \mathbb{R}^n$.
We assume
\begin{itemize}
\item[] Monotonicity; $F(x,u,p,X) \geq  F(x,v,p,Y)$
whenever $u \geq v$, $X \leq Y$, and the growth condition
\begin{align}\label{eq:ass_drift_super}
F(x,r,p,X) \geq  \mathcal P_{\lambda,\Lambda}(p,X) 
%- \Lambda \text{Tr}(X^+) + \lambda \text{Tr}(X^-) 
- \Phi(x,r,p)
\tag{$\star\star$}
\end{align}
where $\mathcal P_{\lambda,\Lambda}$ is a Bellman-type operator (see Section \ref{sec:prel}) 
%Write $x=(x_1,y,z)$, where $y\in\mathbb R^{n-m-1}$ and $z\in\mathbb R^m$. 
%For $X\in\mathbb S^n$, let $X_y$ and $X_z$ denote the blocks corresponding to the variables $y$ and $z$, then
%
%\begin{align*}%\label{eq:ass_drift_super}
%\mathcal P^-_{m}(X)
%= - \Lambda\left( X_{11}^+ + \text{Tr} X_{y}^+  + \left(\text{Tr} X_{z}\right)^+ \right) 
%+ \lambda\left( X_{11}^- + \text{Tr} X_{y}^- +\left(\text{Tr} X_{z}\right)^- \right) 
%\end{align*}
%
%in which $X_y = X_y^+ - X_y^-$ with $X_y^+ \geq 0$, $X_y^- \geq 0$, and $X_y^+  X_y^- = 0$, 
in which $0 < \lambda \leq \Lambda < \infty$  may depend on $x,r,p$ so that $ \lambda, \Lambda, \Phi : \mathbb{R}^n \times \mathbb{R} \times \mathbb{R}^n \to \mathbb{R}$.
\end{itemize}

Our main results establish general decay estimates for viscosity subsolutions of \eqref{eq:main}, assuming \eqref{eq:ass_drift_super}, in $(m,M)$-thin domains in $\mathbb{R}^n$ (Theorem \ref{th:exp-decay-pipe}). 
These results apply to equations with vanishing ellipticity ($\frac{\lambda}{\Lambda} \rightarrow 0$) and with lower-order terms (nonhomogeneous case, $\Phi \neq 0$). 
If $\Phi \equiv 0$ and if subsolutions satisfy a boundary growth estimate, 
then we obtain a slightly sharper result (Corollary \ref{cor:exp-decay-pipe}). 

More precisely, our main result reduces the decay problem to finding a positive function $\widehat\Phi$ of the longitudinal coordinate $x_1$ that satisfies a differential inequality. 
Indeed, if $\widehat\Phi$ satisfies 
\begin{align}\label{eq:introineq}
    \frac{\widehat\Phi''(x_1)}
         {\widehat\Phi(x_1)}
    \leq \kappa\,
    \frac{\lambda(x_1)}
         {\Lambda(x_1)}
     \quad \text{then} \quad u(x) \leq A \widehat\Phi(x_1),
\end{align}
whenever $x = (x_1,x_2,\dots,x_n)$ is in an $(m,M)$-thin domain, and where $\kappa = \kappa(m,M)$ 
%
%\begin{align*}
%    \kappa
%    =
%    \left(
%        \frac{j_{\frac{m-2}{2},1}}{M}
%    \right)^2
%\end{align*}
%
is the first Dirichlet eigenvalue of $-\Delta$ in the $m$-dimensional ball of radius $M$. 
See \eqref{eq:kappa} and Section \ref{sec:prel} for an explicit expression of $\kappa$ and how it relates to the geometry of the domain.  
%Here, $m$ and $M$ characterize the $(m,M)$-thin domain $\Omega$ and 
%$j_{\nu,1}$ is the first positive zero of the Bessel function of the first kind of order $\nu$.
For $m=1$ we obtain
$
\kappa = \frac{\pi^2}{4M^2} 
$
which agrees with the introductory example above. %, see also Table \ref{tab:table1}.

The estimate makes explicit the interaction between the transverse
geometry of the domain via $\kappa$ 
and the possible degeneration of the ellipticity, $\lambda/\Lambda \to 0$, 
in the longitudinal direction $x_1$. 
For uniformly elliptic equations, this immediately gives exponential
decay; any subsolution must decay at least as fast as
\begin{align*}
    \exp\left(-\sqrt{\kappa\,\frac{\lambda}{\Lambda}}\,x_1\right).
\end{align*}
On the other hand, if the ellipticity ratio
$\lambda/\Lambda$ tends to zero, the one-dimensional inequality produces
slower decay rates and may eventually cease to admit a positive function
tending to zero. 
Moreover, whenever \eqref{eq:introineq} admits a positive solution with equality, 
tending to zero as $x_1 \to \infty$, our estimate is attained by an explicit
solution of an equation in the class of PDEs considered.
Thus, in such a case, both the dependence on the ellipticity ratio $\lambda/\Lambda$
and the transverse constant $\kappa$ are optimal; see Section~\ref{sec:sharp}.

%%%%%%%%%%%%%%%%%%%%%%%%%%%%%%%%%%%%%%%%%%%%%%%%%%%%%%%%%%%%%%%%%
%%%%%%%%%%%%%%%%%%%%%%%%%%%%%%%%%%%%%%%%%%%%%%%%%%%%%%%%%%%%%%%%%
%%%%%%%%%%%%%%%%%%%%%%%%%%%%%%%%%%%%%%%%%%%%%%%%%%%%%%%%%%%%%%%%%
%%%%%%%%%%%%%%%%%%%%%%%%%%%%%%%%%%%%%%%%%%%%%%%%%%%%%%%%%%%%%%%%%
%%%%%%%%%%%%%%%%%%%%%%%%%%%%%%%%%%%%%%%%%%%%%%%%%%%%%%%%%%%%%%%%%

\subsection*{Literature review}

The half-strip example above also admits a useful interpretation of PLP in terms of the longitudinal propagation of boundary disturbances. We remark that the decay rate of the solution is sensitive to the boundary condition on $z=\pm M$. Under homogeneous Dirichlet conditions, every transverse mode has positive eigenvalue and therefore decays exponentially. Under homogeneous Neumann conditions, a constant zero mode remains, while all nonconstant modes still decay.

In other words, changing the data at the inlet $x_1=0$ has an exponentially small effect far down the strip, provided growing modes are excluded by an appropriate boundedness condition. This principle has been generalized in various directions. An abstract generalization is due to Lax \cite{L57} that extends PLP from harmonic functions to higher-order elliptic equations in half-cylinders. While the present work establishes pointwise decay estimates, other formulations measure decay through a Dirichlet energy integral over tails of the domain \cite{Ole80}. In the context of elliptic systems arising in continuum mechanics, estimates of this kind are commonly known as Saint-Venant-type estimates, see e.g. \cite[Ch.~XIII]{G11} and the references therein. Their connection to boundary layers is discussed below.

Results closely related to the present work are the Phragm\'en--Lindel\"of estimates in \cite{L22} and the maximum and Phragm\'en--Lindel\"of principles in \cite{CDV19}. In \cite{L22}, sharp lower growth estimates for subsolutions of fully nonlinear equations in half-spaces are characterized through one-dimensional differential inequalities. Here we consider the complementary problem of obtaining upper decay estimates in thin domains. In \cite{CDV19}, the authors study fully nonlinear degenerate elliptic equations in cylindrical domains under structural assumptions that relate strict ellipticity to the bounded directions while permitting degeneracy in the unbounded ones. Their results identify growth conditions under which boundary sign information propagates throughout the domain. By contrast, we assume boundedness and lateral boundary control and derive explicit pointwise decay estimates along a distinguished longitudinal direction. These estimates depend sharply on the transverse Dirichlet eigenvalue and remain applicable when the ellipticity ratio vanishes at infinity.

Related asymptotic and decay problems in cylindrical domains have been studied in several other settings. Linear and nonlinear elliptic problems in domains becoming unbounded in one or more directions were considered in \cite{CR02}, while \cite{C07} established exponential decay for nonlinear elliptic systems in infinite cylinders, with applications to boundary layer problems. For Laplacian eigenfunctions in domains with branches of variable cross-section, exponential decay is governed by spectral properties of the transverse geometry \cite{DNG12}. Phragm\'en--Lindel\"of-type estimates for certain nonlinear elliptic equations in circular cylinders were obtained in \cite{SR86}. The present framework allows fully nonlinear equations in nondivergence form with degenerating ellipticity and lower-order terms, and makes the interaction among ellipticity, transverse geometry, and longitudinal decay explicit through a one-dimensional differential inequality.

\subsubsection*{Boundary layers}

Boundary layers are a central topic in singular perturbation theory \cite{Hol13}. In a second-order Dirichlet problem the highest derivative is multiplied by a small parameter $\epsilon.$ Setting $\epsilon$ to zero yields the so-called outer solution of the problem, which is unable to satisfy both boundary values. Rescaling the coordinates near the boundary gives a complementary inner solution which represents a rapid transition of the true solution in a small region, called a boundary layer, where the outer solution fails. A boundary layer corrector matches the inner and outer solutions near the boundary into a uniformly valid approximation. To remain localized near the boundary, the corrector must decay along the corresponding stretched semi-infinite domain, typically at an exponential rate. Establishing this decay is therefore a Phragm\'en--Lindel\"of-type problem.

Boundary layers are particularly important in the analysis of fluid flow in thin domains \cite{Pan98,Pan24}. A representative example comes from pressure-driven Stokes flow in a thin channel or tube \cite{BFMW25}. The leading-order Poiseuille profile satisfies the no-slip condition on the lateral walls but is generally incompatible with realistic inlet and outlet conditions. Boundary layer correctors posed on semi-infinite tubes compensate for this mismatch. Their exponential decay ensures that the correction remains localized near the ends and that the Poiseuille profile is accurate in the interior. Although the Stokes system lies outside the scalar framework of the present paper, it illustrates the same decay mechanism. 

In linear elasticity it is important to understand how disturbances in the traction prescribed at the end of a long or thin body propagate into its interior. Saint-Venant's principle states that two statically equivalent loads, i.e. tractions which produce the same resultant force and moment, generate displacement and stress fields that become increasingly similar with distance from the loaded region \cite{Toup65,Know66}. Equivalently, the perturbation of a self-equilibrated load decays rapidly away from its support, so the region near the loaded end is effectively a boundary layer. Similar localization phenomena arise in homogenization problems \cite{All99}.

The remainder of the paper is organized as follows.
Section~\ref{sec:prel} introduces the necessary definitions and preliminary results.
Section~\ref{sec:main} presents the main theorems and their proofs.
Finally, Section~\ref{sec:sharp} discusses sharpness and provides examples and applications of the estimates.

%%%%%%%%%%%%%%%%%%%%%%%%%%%%%%%%%%%%%%%%%%%%%%%%%%%%%%%%%%%%%%%%%%%%%%%%%%%%
%%%%%%%%%%%%%%%%%%%%%%%%%%%%%%%%%%%%%%%%%%%%%%%%%%%%%%%%%%%%%%%%%%%%%%%%%%%%
%%%%%%%%%%%%%%%%%%%%%%%%%%%%%%%%%%%%%%%%%%%%%%%%%%%%%%%%%%%%%%%%%%%%%%%%%%%%
%%%%%%%%%%%%%%%%%%%%%%%%%%%%%%%%%%%%%%%%%%%%%%%%%%%%%%%%%%%%%%%%%%%%%%%%%%%%
%%%%%%%%%%%%%%%%%%%%%%%%%%%%%%%%%%%%%%%%%%%%%%%%%%%%%%%%%%%%%%%%%%%%%%%%%%%%

\section{Preliminaries}
\label{sec:prel}
\setcounter{theorem}{0}
\setcounter{equation}{0}

In the following, 
a connected open set will be referred to as a domain, 
and we will denote points in Euclidean $n$-space $\mathbb{R}^n$  by $x=(x_1,y,z) = (x_1,y_1,\dots,y_\ell,z_1,\dots,z_m)$, where $y\in\mathbb{R}^{\ell}$, $z\in\mathbb{R}^{m}$ and $n = 1 + \ell + m$.  
We assume $\ell\geq 0$, $m\geq 1$ and define:

\begin{definition}\label{def:narrowdomain}
We say that a domain $\Omega \subset \mathbb{R}^n$ is $(m,M)$-thin if
\begin{align*}%\label{eq:domain_bound}
\Omega \subset \{x = (x_1,y_1,\dots,y_\ell,z_1,\dots,z_m) \in \mathbb{R}^n : x_1 > 0, |z| < M\}.
\end{align*}
Moreover, for an $(m,M)$-thin domain $\Omega$ we use the notation $\Sigma_L = \overline \Omega \cap \{x : x_1 = L\}$ for $L \geq 0$, and $\Gamma^+ = \partial \Omega \cap \{x_1 > 0\}$ so that $\partial \Omega = \Sigma_0 \cup \Gamma^+$. Finally, we call the domain 
$
\{x=(x_1,y,z)\in\mathbb R^n:x_1>0,\ |z|<M\}
$
the maximal $(m,M)$-thin domain.
\end{definition} 
We remark that a $(2,M)$-thin domain in $\mathbb{R}^3$ may model a pipe contained in a cylinder with radius $M$ having `inlet' $\Sigma_0$ at $x_1 = 0$ and lateral boundary $\Gamma^+$, 
and a $(1,M)$-thin domain in $\mathbb{R}^3$ is trapped between two parallel planes at distance $2M$, having `inlet' $\Sigma_0$ at $x_1 = 0$ and, if it is maximal, lateral boundary $\Gamma^+$ consisting of the planes $z = \pm M$. 

To define the operator $\mathcal P_{\lambda,\Lambda}$, 
let $\lambda, \Lambda : \mathbb{R}^n \times \mathbb{R} \times \mathbb{R}^n \to \mathbb{R}$, $0<\lambda\leq\Lambda$. 
For $x \in \mathbb R^n, r \in \mathbb{R}, q\in\mathbb R^n$, define
\begin{align*}
\mathcal A(x,r,q) = &
\left\{
\begin{pmatrix}
a&0&0\\
0&B&0\\
0&0&cI_m
\end{pmatrix}
:
\lambda\leq a,c\leq\Lambda,\quad
\lambda I_\ell\leq B\leq\Lambda I_\ell
\right\}
\\
&\cup
\left\{
\Lambda I_n-(\Lambda-\lambda)
\frac{q\otimes q}{|q|^2 + \eta^2} : \eta > 0
\right\},    
\end{align*}
where $\otimes$ denotes the outer product. 
The associated Bellman-type operator considered in this paper is
\[
\mathcal P_{\lambda,\Lambda}(q,X)
=
\inf_{A\in\mathcal A(x,r,q)}
\bigl\{-\operatorname{Tr}(AX)\bigr\},
\quad \text{whenever} \quad X\in\mathbb S^n.
\]
Let \(X_y\) and \(X_z\) denote the diagonal blocks of \(X\)
corresponding to the \(y\)- and \(z\)-variables, respectively.
Write \(X_y=X_y^+-X_y^-\) for its spectral decomposition into
positive and negative parts, and set \(t^\pm=\max\{\pm t,0\}\)
for \(t\in\mathbb R\). Then

\begin{align}\label{eq:operatorexpression}
\mathcal P_{\lambda,\Lambda}(q,X)=
\begin{cases}
\displaystyle
\min\left\{
B_{\lambda,\Lambda}(X),\,
-\Lambda\operatorname{Tr}X
+(\Lambda-\lambda)\frac{\langle Xq,q\rangle}{|q|^2}
\right\},&q\neq0,\\[2mm]
B_{\lambda,\Lambda}(X),&q=0.
\end{cases}
\end{align}
where
\begin{align*}
B_{\lambda,\Lambda}(X)=
-\Lambda\left(X_{11}^+
+\operatorname{Tr}(X_y^+)+(\operatorname{Tr}X_z)^+\right)
+\lambda\left(X_{11}^-
+\operatorname{Tr}(X_y^-)+(\operatorname{Tr}X_z)^-\right).
\end{align*}

We let $\overline{E}$ be the closure and $\partial E$ the boundary of the set $E \subset \mathbb{R}^n$,  
and $d(y, E)$ equals the distance from $y \in \mathbb{R}^n$ to $E$. 
The space of continuous functions on $E$ is denoted by $C(E)$; 
%$C^{\alpha}(E)$, for $0<\alpha\leq 1$, consists of functions in $C(E)$ that are Hölder continuous of order $\alpha$,
$C^k(E)$ is the space of functions whose $k$:th partial derivatives exist and are continuous on $E$. 
%, and $C^{1,\alpha}$ consists of functions in $C^1(E)$  whose first-order derivatives are also Hölder continuous of order $\alpha$. 
$LSC(E)$ denotes lower, while $USC(E)$ denotes upper semicontinuous functions on $E$, 
$\langle .,.\rangle$ denotes the standard Euclidean inner product on $\mathbb{R}^n$ and $|x|=\sqrt{\langle x, x \rangle}$ is the Euclidean norm of $x$. 
The open ball centered at $x$ with radius $r>0$ is denoted by $B(x,r)=\{y\in\mathbb{R}^n:|x-y|<r\}$, 
$dx$ is the Lebesgue $n$-measure on $\mathbb{R}^n$, 
and, %As a convention, in this paper, 
unless otherwise
stated, $c$ will denote a constant $\geq 1$, not necessarily the same at each occurrence.
%, and we write $A \lesssim B$ ($A \gtrsim B$) if there exists $c$ such that $A \leq c B$ ($c A \geq B$), 
%and $A \approx B$ means there exists $c$ so that $c^{-1} A \leq B \leq c A$. 
%
Let $J_\nu$ denote the Bessel function of the first kind of order $\nu$, 
$j_{\nu,1}$ the first positive zero of $J_\nu$, and let
\begin{align}\label{eq:kappa}
    \kappa  =   \left(\frac{j_{\frac{m-2}{2},1}}{M}\right)^2
\end{align}
denote the first Dirichlet eigenvalue of $-\Delta$ in the $m$-dimensional ball $B(0,M)$. 
Table~\ref{tab:table1} gives $j_{\frac{m-2}{2},1}$ and $\kappa$ for some values of $m$. 
For some standard estimates of $\kappa$ and $j_{\nu,1}$ we refer the reader to e.g. \cite{bok_spec_func}. 

\begin{table}[ht]
    \centering
    \renewcommand{\arraystretch}{2.5}
    \setlength{\tabcolsep}{16pt}
    \begin{tabular}{c c c c}
        \hline
        \medskip
        $m$ & $\nu$ & $j_{\nu,1}$ & $\displaystyle \kappa
        = \left(\frac{j_{\nu,1}}{M}\right)^2$ \\
        \hline
        1 & $\displaystyle -\frac{1}{2}$ & $\displaystyle \frac{\pi}{2}$
          & $\displaystyle \frac{\pi^2}{4M^2}$ \\

        2 & $0$ & $2.40482556$
          & $\displaystyle \frac{5.7832}{M^2}$ \\

        3 & $\displaystyle \frac{1}{2}$ & $\pi$
          & $\displaystyle \frac{\pi^2}{M^2}$ \\

        4 & $1$ & $3.83170597$
          & $\displaystyle \frac{14.682}{M^2}$ \\

        5 & $\displaystyle \frac{3}{2}$ & $4.49340946$
          & $\displaystyle \frac{20.191}{M^2}$ \\

        6 & $2$ & $5.13562230$
          & $\displaystyle \frac{26.374}{M^2}$ \medskip\\
        \hline
    \end{tabular}
    \caption{The first positive zero $j_{\nu,1}$ and the corresponding
    first Dirichlet eigenvalue $\kappa$ for selected dimensions $m$.}
    \label{tab:table1}
\end{table}

A function $u : \Omega \to \mathbb{R}$ is a classical subsolution (supersolution) to \eqref{eq:main} in $\Omega$ if it is twice differentiable in $\Omega$ and satisfies $F(x,u,Du,D^2u) \leq 0$ ($F(x,u,Du,D^2u) \geq 0$).
If the inequality is strict then $u$ is a strict classical subsolution (supersolution),
and if equality holds then it is a classical solution.
We choose to present our main results for viscosity subsolutions, defined in a suitable way depending on $F$, see e.g. \cite{CIL92} for continuous $F$ and \cite{JJ12} for definitions of viscosity solutions for $p$-Laplace-type equations.
The use of viscosity solutions is not essential to our results. Other notions of weak solution may be adopted when they are better suited to the equation under consideration. The key ingredient in our argument is the weak comparison principle stated in Lemma~\ref{le:comp-weak-some-sence}. Thus, a subsolution may be understood in any sense for which this principle holds against the strict classical supersolutions constructed in the next section.
\begin{lemma}\label{le:comp-weak-some-sence}
Let $\Omega$ be a bounded domain and assume the monotonicity in \eqref{eq:ass_drift_super}. Let $u \in USC(\overline{\Omega})$ be a subsolution (in some weak sense)
and $v$ a classical strict supersolution of \eqref{eq:main} in $\Omega$, continuous on $\overline{\Omega}$.
If $u \leq v$ on $\partial \Omega$ then $u < v$ in $\Omega$.
\end{lemma}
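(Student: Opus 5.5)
The plan is the standard touching argument: show that $u-v$ cannot attain a nonnegative maximum at an interior point, because there $v$ would serve as a test function for $u$ and contradict strictness. Set $w=u-v$. Since $u\in USC(\overline\Omega)$, $v\in C(\overline\Omega)$ and $\overline\Omega$ is compact ($\Omega$ bounded), $w$ is upper semicontinuous on a compact set. Hence $w$ attains its maximum $\mu$ at some $x_0\in\overline\Omega$.

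We argue by contradiction and suppose $u(x_0')\ge v(x_0')$ for some $x_0'\in\Omega$, so that $\mu\ge 0$. If the maximum is attained only on $\partial\Omega$, then $\mu\le 0$ by the boundary hypothesis. Combined with $\mu\ge 0$ this gives $\mu=0$, which is then also attained at the interior point $x_0'$. In every case we obtain an interior point $x_0\in\Omega$ where $w$ attains its maximum $\mu\ge0$.

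At $x_0$, the function $\varphi=v+\mu$ is twice differentiable and touches $u$ from above: $\varphi\ge u$ near $x_0$ and $\varphi(x_0)=u(x_0)$. By the definition of viscosity subsolution, applied with this classical test function, we get
\[
F(x_0,u(x_0),Dv(x_0),D^2v(x_0))\le 0 .
\]
This is the delicate step, because $v$ is only assumed twice differentiable, not $C^2$. For the usual definition one passes to the equivalent semijet formulation: $(Dv(x_0),D^2v(x_0))\in J^{2,+}u(x_0)$ by second-order Taylor expansion at $x_0$, so the inequality holds. For other weak notions, such as the $p$-Laplace setting of \cite{JJ12}, this is exactly the property we must assume of the chosen solution concept, in line with the remark preceding the lemma.

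Finally we use monotonicity in the $r$-variable. Since $u(x_0)=v(x_0)+\mu\ge v(x_0)$,
\[
F(x_0,v(x_0),Dv(x_0),D^2v(x_0))\le F(x_0,u(x_0),Dv(x_0),D^2v(x_0))\le 0 ,
\]
which contradicts the strict classical supersolution inequality $F(x_0,v,Dv,D^2v)>0$. Therefore $u<v$ throughout $\Omega$.

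The main obstacle is justifying that $v$ is an admissible test function, that is, the twice-differentiable versus $C^2$ issue and the dependence on the chosen weak notion. The rest of the argument is elementary.
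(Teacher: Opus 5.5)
Your proof is correct and follows the paper's argument: an interior nonnegative maximum of $u-v$ makes $v$ (shifted by a constant) an admissible test function, and monotonicity in $r$ then contradicts $F(x_0,v,Dv,D^2v)>0$. Your extra care fills in two points the paper's proof passes over quickly. First, you show the maximum can always be taken at an interior point rather than only on $\partial\Omega$. Second, you address the fact that $v$ is assumed twice differentiable rather than $C^2$, which you handle via the semijet formulation.
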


\noindent
\begin{proof}
Assume that $v$ is a classical strict supersolution and also, by way of contradiction, that $u \geq v$ somewhere in $\Omega$. 
By $USC$ the function $u-v$ then attains a maximum $\geq 0$ at some point $x_0 \in \Omega$.
Since $v \in C^2(\Omega)$, $u - v$ has a maximum at $x_0$, and $u$ is a viscosity subsolution, it follows by definition that
\begin{align}\label{eq:johej-lemma1}
F(x_0, u(x_0), Dv(x_0), D^2 v(x_0)) \leq 0.
\end{align}
But since $v$ is a classical strict supersolution, 
we have 
$
F(x,v(x), Dv(x), D^2v(x)) > 0 %\quad \text{whenever} \quad x \in \Omega,
$
whenever $x \in \Omega$, 
and as $u(x_0) \geq v(x_0)$ it follows from the monotonicity in \eqref{eq:ass_drift_super} that
$$
F(x_0, u(x_0), Dv(x_0), D^2 v(x_0)) \geq F(x_0, v(x_0), Dv(x_0), D^2 v(x_0)) > 0.
$$
This contradicts \eqref{eq:johej-lemma1} and hence we have proved the lemma. 
\end{proof}

%%%%%%%%%%%%%%%%%%%%%%%%%%%%%%%%%%%%%%%%%%%%%%%%%%%%%%%%%%%%%%%%%%%%%%%%%%%%
%%%%%%%%%%%%%%%%%%%%%%%%%%%%%%%%%%%%%%%%%%%%%%%%%%%%%%%%%%%%%%%%%%%%%%%%%%%%
%%%%%%%%%%%%%%%%%%%%%%%%%%%%%%%%%%%%%%%%%%%%%%%%%%%%%%%%%%%%%%%%%%%%%%%%%%%%
%%%%%%%%%%%%%%%%%%%%%%%%%%%%%%%%%%%%%%%%%%%%%%%%%%%%%%%%%%%%%%%%%%%%%%%%%%%%
%%%%%%%%%%%%%%%%%%%%%%%%%%%%%%%%%%%%%%%%%%%%%%%%%%%%%%%%%%%%%%%%%%%%%%%%%%%%
%%%%%%%%%%%%%%%%%%%%%%%%%%%%%%%%%%%%%%%%%%%%%%%%%%%%%%%%%%%%%%%%%%%%%%%%%%%%
%%%%%%%%%%%%%%%%%%%%%%%%%%%%%%%%%%%%%%%%%%%%%%%%%%%%%%%%%%%%%%%%%%%%%%%%%%%%
%%%%%%%%%%%%%%%%%%%%%%%%%%%%%%%%%%%%%%%%%%%%%%%%%%%%%%%%%%%%%%%%%%%%%%%%%%%%
%%%%%%%%%%%%%%%%%%%%%%%%%%%%%%%%%%%%%%%%%%%%%%%%%%%%%%%%%%%%%%%%%%%%%%%%%%%%
%%%%%%%%%%%%%%%%%%%%%%%%%%%%%%%%%%%%%%%%%%%%%%%%%%%%%%%%%%%%%%%%%%%%%%%%%%%%
%%%%%%%%%%%%%%%%%%%%%%%%%%%%%%%%%%%%%%%%%%%%%%%%%%%%%%%%%%%%%%%%%%%%%%%%%%%%

\section{Statement and proof of the main theorems}
\label{sec:main}
\setcounter{theorem}{0}
\setcounter{equation}{0}

We are now ready to state our main results, beginning with the following theorem.

\begin{theorem} \label{th:exp-decay-pipe}
Let $\Omega$ be an $(m,M)$-thin domain,  
assume that \eqref{eq:ass_drift_super} holds with 
$\lambda=\lambda(x_1), \Lambda = \Lambda(x_1)$, %and 
%$\lambda/\Lambda$ nonincreasing, 
and
$$
\Phi(x,r,p) \leq C \widehat \Phi(x_1) \lambda(x_1) \quad \text{whenever} \quad (x,r,p) \in \Omega\times \mathbb{R} \times \mathbb{R}^n
$$
for some positive, nonincreasing $\widehat\Phi \in C^2([0,\infty))$
satisfying $\widehat \Phi'' > 0$ on $[0,\infty)$ and $\widehat \Phi(0) = 1$.
If $u$ is a viscosity subsolution of \eqref{eq:main} in $\Omega$, 
satisfying $u \leq C_1$ on $\overline \Omega$ and $u \leq 0$ on $\Gamma^+$,  
then for every $\bar \epsilon > 0$ there exists $A = A(\bar \epsilon, C, C_1, m, M) > 0$ such that
\begin{align*}
\frac{\widehat\Phi''(x_1)}{\widehat\Phi (x_1)} \leq 
\left(\kappa - \bar \epsilon \right) \frac{\lambda(x_1)}{\Lambda(x_1)}  \quad \text{for all}\quad x_1 > 0
\quad \implies 
\quad  u(x) \leq A\, \widehat \Phi(x_1) %\quad \text{in}\quad \Omega,
\end{align*}
whenever $x \in  \Omega$ and 
where $\kappa$ is as in \eqref{eq:kappa}.% = \left(\frac{j_{\frac{(m-2)}{2}, 1} }{M}\right)^2$.
\end{theorem}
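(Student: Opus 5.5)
Following the barrier argument from the introduction, we build a strict classical supersolution on truncated domains, apply Lemma~\ref{le:comp-weak-some-sence}, and let the truncation go to infinity. Fix $\bar\epsilon>0$ and choose a slightly enlarged radius $M'>M$ such that the corresponding eigenvalue $\kappa'=(j_{\frac{m-2}{2},1}/M')^2$ satisfies $\kappa'\geq\kappa-\bar\epsilon/2$. Let $\psi(z)=|z|^{1-m/2}J_{\frac{m-2}{2}}(\sqrt{\kappa'}|z|)$ be the radial first Dirichlet eigenfunction of the ball $B(0,M')\subset\mathbb R^m$. Then $\Delta_z\psi=-\kappa'\psi$, and on $|z|\le M$ we have $\psi\geq c_0>0$. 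For $L>0$ set $\Omega_L=\Omega\cap\{x_1<L\}$ and take the candidate
\[
v_L(x)=A\Bigl(\widehat\Phi(x_1)+\frac{x_1}{L}\Bigr)\psi(z)-\delta,
\]
or a variant with the same structure. The linear term in $x_1$ handles the cut face $\Sigma_L$ exactly as in the introduction, and a small correction may be used to restore strictness. The boundary comparison is as follows. On $\Gamma^+$ we have $u\le 0< v_L$ once the correction is small, which is where $\psi\ge c_0$ is used. On $\Sigma_L$ we have $v_L\geq A c_0\ge C_1\ge u$ for $A\ge C_1/c_0$. On $\Sigma_0$ we have $v_L\ge A c_0\widehat\Phi(0)=Ac_0\ge C_1$. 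The domain $\Omega_L$ is bounded in $x_1$ and $z$ but possibly not in $y$. To handle this, I would either intersect with large balls in $y$, or note that $v_L$ is independent of $y$ and use a $y$-cutoff with a vanishing parameter. This is a technical point to handle with a further limit.

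The core step is checking that $v_L$ is a strict classical supersolution, i.e.\ $F(x,v_L,Dv_L,D^2v_L)>0$, using \eqref{eq:ass_drift_super}. Write $g(x_1)=\widehat\Phi(x_1)+x_1/L$. Then $g''=\widehat\Phi''>0$, the $y$-block of $D^2v_L$ vanishes, and $\operatorname{Tr}X_z=-A\kappa' g\psi<0$. There are mixed entries $A g'\nabla\psi$, but $B_{\lambda,\Lambda}$ only sees the diagonal block traces, so
\[
B_{\lambda,\Lambda}(D^2v_L)=-\Lambda A\widehat\Phi''\psi+\lambda A\kappa' g\psi .
\]
Using $\widehat\Phi''\le(\kappa-\bar\epsilon)\frac{\lambda}{\Lambda}\widehat\Phi$ and $g\ge\widehat\Phi$, this is at least $A\lambda\psi\widehat\Phi(\kappa'-\kappa+\bar\epsilon)\geq A\lambda c_0\frac{\bar\epsilon}{2}\widehat\Phi$. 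This exceeds $\Phi\le C\widehat\Phi\lambda$ once $A\ge 4C/(c_0\bar\epsilon)$, which gives the stated dependence of $A$.

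The main obstacle is the second branch of $\mathcal P_{\lambda,\Lambda}$, namely
\[
-\Lambda\operatorname{Tr}X+(\Lambda-\lambda)\frac{\langle Xq,q\rangle}{|q|^2}.
\]
For this I would compute $\operatorname{Tr}X=A\psi(\widehat\Phi''-\kappa' g)$ and show that the Rayleigh quotient $\langle Xq,q\rangle/|q|^2$ is at most the largest eigenvalue of $X$. The expression is then at least $\lambda(-\operatorname{Tr}X)$ minus $(\Lambda-\lambda)$ times the positive eigenvalues. For this to work, the direction-$q$ contribution has to be controlled by the $x_1$-direction term $\widehat\Phi''$ plus the mixed terms. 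I would first try to exploit the fact that $q=Dv_L$ is an eigen-like direction: $\langle Xq,q\rangle$ combines $\widehat\Phi''g'^2\psi^2$, the term $g^2\langle D^2\psi\nabla\psi,\nabla\psi\rangle$ (which is negative near the center of the ball because $\psi$ is radially concave there, and is checked using Bessel identities), and cross terms $2g'^2g\psi|\nabla\psi|^2$. If these cannot be controlled uniformly, I would modify the barrier by replacing $\psi$ with $\psi^\theta$ or a concave function of the product so that $v_L$ becomes concave in the relevant directions. After that the conclusion is routine: Lemma~\ref{le:comp-weak-some-sence} gives $u<v_L$ in $\Omega_L$, and letting $L\to\infty$ and the correction go to $0$ yields $u\le A\widehat\Phi(x_1)\sup\psi\le A'\widehat\Phi(x_1)$.
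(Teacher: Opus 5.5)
Your overall scheme (Bessel profile on an enlarged ball, the $x_1/L$ term, Lemma~\ref{le:comp-weak-some-sence}, infimum over $L$) is the paper's, and your computation of $B_{\lambda,\Lambda}(D^2v_L)$ is correct. However, the decisive step, the second branch of $\mathcal P_{\lambda,\Lambda}$, is left open, and the routes you sketch do not close it.

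Since $(\Lambda-\lambda)\langle Xq,q\rangle/|q|^2$ enters with a plus sign, an upper bound on the Rayleigh quotient is useless. An eigenvalue-only lower bound gives at best $-\Lambda\operatorname{Tr}X+(\Lambda-\lambda)e_{\min}(X)$, and this fails. Take $m=1$, $\ell=0$, $\widehat\Phi=e^{-kx_1}$ (dropping $x_1/L$) and $\psi=\cos(\sqrt{\kappa'}z)$. The Hessian has trace $Ae^{-kx_1}(k^2-\kappa')\psi$ and determinant $-A^2e^{-2kx_1}k^2\kappa'$. So where $\psi$ is small (near $|z|=M$, since $c_0\to0$ as $M'\downarrow M$) the bound is close to $-(\Lambda-\lambda)Ae^{-kx_1}k\sqrt{\kappa'}<0$. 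Changing $\psi$ to $\psi^\theta$ or concavifying would give up the sharp $\kappa$.

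The missing inequality is already visible in your own expansion. With $G=\widehat\Phi+x_1/L$ and $q=Dv_L$,
\[
\langle Xq,q\rangle=A^3\bigl(G''G'^2\psi^3+2G'^2G\psi|\nabla\psi|^2+G^3\bar\psi''(\bar\psi')^2\bigr).
\]
The first two terms are nonnegative. Since $\psi$ is radial with $\bar\psi'\le0$, you have $\bar\psi''=\Delta_z\psi-\frac{m-1}{|z|}\bar\psi'\ge\Delta_z\psi$, and no further Bessel identities are needed. Hence the last term is at least $AG\Delta_z\psi\,|D_zv_L|^2\ge\operatorname{Tr}X_z\,|q|^2$, using $\Delta_z\psi\le0$. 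Therefore
\[
-\Lambda\operatorname{Tr}X+(\Lambda-\lambda)\frac{\langle Xq,q\rangle}{|q|^2}\ge-\Lambda X_{11}-\lambda\operatorname{Tr}X_z=-\Lambda A\widehat\Phi''\psi+\lambda A\kappa'G\psi.
\]
This is exactly your value of $B_{\lambda,\Lambda}(D^2v_L)$, so your choice of $A$ then works for both branches. This is the claim \eqref{eq:claimforlemma} in Lemma~\ref{le:supersol}.

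The unbounded $y$-directions are a second genuine gap, not a technicality. Intersecting with $\{|y|<R\}$ does not work because $v_L$ is independent of $y$. In the maximal domain, $\{|y|=R\}$ meets every slice $x_1\in(0,L)$, and there $v_L\le A\psi(0)(\widehat\Phi+x_1/L)$. The minimum of this over $[0,L]$ tends to $0$ as $L\to\infty$ (it is of order $\ln L/L$ for $\widehat\Phi=e^{-kx_1}$). Meanwhile all you know on that surface is $u\le C_1$, and $A$ may not depend on $L$.

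You need a factor that grows in $|y|$ and is paid for inside the supersolution inequality. The paper takes $\hat u_L=f(x_1)g(y)h(z)$ with $g=1+\epsilon_L|y|^2$ and truncates at $|y|^2=A/(f\epsilon_L)$, so that $\hat u_L\ge C_1$ there. It compensates $\Delta_yg/g\le2\ell\epsilon_L$ by using $f=A(\widehat\Phi-\alpha_L(x_1/L)^2+x_1/L)$ with $\epsilon_L=\alpha_L/(\ell L^2)$. Because $f''\ge0$ and $D^2g\ge0$, all cross terms in $\langle Xq,q\rangle$ stay nonnegative, and the argument above goes through unchanged. (The shift $-\delta$ is unnecessary: strictness already comes from the $\bar\epsilon/2$ margin.)
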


If, in addition to the assumptions of Theorem \ref{th:exp-decay-pipe}, 
the subsolutions satisfy a boundary growth estimate, 
then, in cases of non-positive lower-order terms  ($\Phi \leq 0$), 
we can take $\bar \epsilon = 0$ in the conclusion 
and prove Corollary \ref{cor:exp-decay-pipe} below. 
A version of such boundary estimate, tailored for our needs, 
can be stated as follows:
Let $\Omega \subset \mathbb{R}^n$ be a maximal $(m,M)$-thin domain, 
$w \in \partial\Omega$, $0 < r < r_0$, 
and $u$ be a subsolution of \eqref{eq:main} in $B(w,r)\cap \Omega$ 
satisfying $u = 0$ on $\partial \Omega \cap B(w,r)$.
Then there exists $\bar c > 0$, independent of $w$ and $r$, so that
\begin{align}\label{eq:for_corr} 
 u(x) \leq \bar c\, d(x, \partial \Omega) \quad \text{whenever} \quad x \in \Omega \cap B(w,r/\bar c).
\end{align}
We remark that maximal $(m,M)$-thin domains are convex and thus satisfy an exterior sphere condition. 
Therefore, the necessary estimate can be found for a wide class of PDEs in \cite{LOT24,LOS25}.

\begin{corollary}\label{cor:exp-decay-pipe}
Let $\Omega$, $\lambda/\Lambda$, $\widehat\Phi$, $u$ and $\kappa$ be as in 
Theorem \ref{th:exp-decay-pipe} and assume \eqref{eq:ass_drift_super} with $\Phi \leq 0$. 
Suppose also that the boundary growth estimate in \eqref{eq:for_corr} holds for $u$. 
Then there exists $A = A(\bar c, C_1, M, m) > 0$ such that
\begin{align*}
%1 \lesssim 
\frac{\widehat\Phi''(x_1)}{\widehat\Phi (x_1)} \leq  \kappa\, \frac{\lambda(x_1)}{\Lambda(x_1) } \quad \text{for all}\quad x_1 > 0
\quad \implies %\text{then}
\quad  u(x) \leq A\, \widehat \Phi(x_1)
\end{align*}
whenever $x \in  \Omega$. 
%and where $\kappa$ is as in \eqref{eq:kappa}.
\end{corollary}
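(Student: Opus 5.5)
The plan is to repeat the barrier argument from the introduction, now with the sharp transverse profile. Let $\psi(z)=|z|^{\frac{2-m}{2}}J_{\frac{m-2}{2}}(j_{\frac{m-2}{2},1}|z|/M)$ be the radial first Dirichlet eigenfunction of $-\Delta_z$ in $B(0,M)\subset\mathbb R^m$. It satisfies $\Delta_z\psi=-\kappa\psi$, $\psi>0$ in the ball, $\psi=0$ on $|z|=M$, and $|\nabla\psi|\ne0$ on $|z|=M$. Since $\Phi\le0$ and $\Lambda\ge\lambda$, no $\bar\epsilon$-room is needed to absorb $\Phi$. The only reason $\bar\epsilon$ was needed in Theorem~\ref{th:exp-decay-pipe} is presumably that there one uses an eigenfunction on a slightly larger ball of radius $M'>M$, so that the barrier stays bounded away from zero on $\Gamma^+$. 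Here we replace that step by the boundary estimate \eqref{eq:for_corr}, which lets us work with $\psi$ on the exact ball.

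\textbf{Barrier.} For $L>1$ and $\delta>0$ set
\[
v_L(x)=A\Bigl(\widehat\Phi(x_1)+\tfrac{x_1}{L}\Bigr)\psi(z)+\delta .
\]
We compute $D^2v_L$. Its $(1,1)$ entry is $A\widehat\Phi''\psi>0$, the $z$-block has trace $-\kappa A(\widehat\Phi+x_1/L)\psi$, the $y$-block vanishes, and there are mixed $x_1z$ terms $A(\widehat\Phi'+1/L)\nabla\psi$. We must show $\mathcal P_{\lambda,\Lambda}(Dv_L,D^2v_L)>0$, which gives $F(x,v_L,Dv_L,D^2v_L)\ge\mathcal P-\Phi>0$.
\begin{itemize}
\item For the block-diagonal part of $\mathcal A$, $\operatorname{Tr}(AX)$ only sees $X_{11}$, $X_y$ and $cI_m$ against $X_z$. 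Hence $B_{\lambda,\Lambda}(D^2v_L)\ge A\psi\bigl(-\Lambda\widehat\Phi''+\lambda\kappa(\widehat\Phi+x_1/L)\bigr)$. This is $\ge A\psi\lambda\kappa x_1/L>0$ exactly by the hypothesis $\widehat\Phi''\le\kappa\frac{\lambda}{\Lambda}\widehat\Phi$.
\item The gradient-direction matrices $\Lambda I-(\Lambda-\lambda)\hat q\otimes\hat q$ are the genuinely delicate part. We would handle them by requiring that $-\operatorname{Tr}(AX)>0$ for every $A$ with eigenvalues in $[\lambda,\Lambda]$, which is a Pucci-type bound. This is where the mixed terms and the non-product structure bite. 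We expect it to be the main obstacle, and it should be handled exactly as in the proof of Theorem~\ref{th:exp-decay-pipe}, since that construction is independent of $\bar\epsilon$. If the theorem's proof needs $\bar\epsilon$ there, the fallback is to use $\psi$ on a ball of radius $M_k\downarrow M$ with $\kappa_k\uparrow\kappa$. We would then let the boundary estimate absorb the discrepancy, noting that $\widehat\Phi''/\widehat\Phi\le\kappa\lambda/\Lambda$ is still needed only up to the strict $x_1/L$ slack.
\end{itemize}

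\textbf{Boundary comparison on $\Omega_L=\Omega\cap\{1<x_1<L\}$.} We may enlarge $\Omega$ to the maximal thin domain, extending $u$ by $0$ near $\Gamma^+$ (or apply \eqref{eq:for_corr} as stated). On $\Gamma^+$ we have $u\le0<v_L$. On $\{x_1=L\}$ we use $v_L\ge A\psi$, and on $\{x_1=1\}$ we use $v_L\ge A\widehat\Phi(1)\psi$. By \eqref{eq:for_corr}, applied at boundary points $w$ with $w_1\ge1/2$ and a fixed radius $r<\min(r_0,1/2)$, we get $u\le\bar c\,(M-|z|)$ within distance $r/\bar c$ of $|z|=M$. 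Away from there $u\le C_1$ while $\psi\ge c(m,M,\bar c)>0$. Since $\psi\ge c_m(M-|z|)/M$ by Hopf, choosing $A=A(\bar c,C_1,M,m)$ large gives $u\le v_L$ on $\partial\Omega_L$. We use that $\widehat\Phi(1)\ge$ some positive constant; if one wants independence from $\widehat\Phi$, this is obtained by applying the argument on $\{0<x_1<L\}$ with $\widehat\Phi(0)=1$ and boundary points near the inlet handled the same way. Lemma~\ref{le:comp-weak-some-sence} then gives $u<v_L$ in $\Omega_L$.

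\textbf{Conclusion.} Letting $L\to\infty$ and $\delta\to0$ gives $u\le A\widehat\Phi(x_1)\psi(z)\le A\|\psi\|_\infty\widehat\Phi(x_1)$. On $0<x_1\le1$ the bound is trivial, since $u\le C_1\le C_1\widehat\Phi(0)/\widehat\Phi(1)$; if needed, the constant's dependence on $\widehat\Phi(1)$ is removed as indicated above.
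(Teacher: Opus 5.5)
\textbf{Gap: the comparison set is unbounded when $\ell>0$.} Your set $\Omega_L=\Omega\cap\{1<x_1<L\}$ is unbounded in the $y$-variables whenever $\ell>0$; this already happens for the maximal $(m,M)$-thin domain. Lemma~\ref{le:comp-weak-some-sence}, however, is stated for bounded domains, and its proof needs $u-v$ to attain its maximum. Your barrier $v_L$ does not grow in $y$, so $\sup(u-v_L)$ may only be approached as $|y|\to\infty$; the additive $\delta$ does not prevent this.

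The missing ingredient is growth in $y$ built into the barrier. The paper multiplies by $g(y)=1+\epsilon_L|y|^2$, which is convex, so Lemma~\ref{le:supersol} still applies. It then truncates at $|y|^2=\omega_L(x_1)=Af(x_1)^{-1}\epsilon_L^{-1}$, where $\hat u_L=(f+A)h\ge Ah(z)\ge u$. The extra term $\Lambda\Delta_y g/g\le 2\ell\epsilon_L\Lambda$ has to be paid for. Your slack $\lambda\kappa x_1/L$ can only do this if $\lambda/\Lambda$ has a positive lower bound on $[1,L]$, and no such bound is assumed. The paper instead takes the room from $\widehat\Phi''$, which is continuous and positive on $[0,L]$ and sits next to $\Lambda$: it uses $f=A(\widehat\Phi-\alpha_L(x_1/L)^2+x_1/L)$ and $\epsilon_L=\alpha_L/(\ell L^2)$.

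\textbf{The gradient-direction matrices.} Your Pucci-type plan cannot work. On $|z|=M$ the Hessian of $v_L$ is trace-free, since its trace is $A\psi(\widehat\Phi''-\kappa(\widehat\Phi+x_1/L))$, but it is in general nonzero, because $\nabla\psi\neq0$ there. Hence $\sum e_i^+=\sum e_i^->0$, and the Pucci value $-\Lambda\sum e_i^++\lambda\sum e_i^-=-(\Lambda-\lambda)\sum e_i^+$ is negative near $\Gamma^+$ whenever $\lambda<\Lambda$. This is exactly why $\mathcal P_{\lambda,\Lambda}$ uses only the restricted family $\mathcal A$. What does work is Lemma~\ref{le:supersol}. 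Positivity of $f,g,h$ makes every mixed term of $\Delta_\infty\hat u$ nonnegative, and radial monotonicity of $h$ gives $\Delta_\infty\hat u\ge fg|D\hat u|^2\Delta_z h$. The lemma applies verbatim with $M^+=M$, so your fallback with $M_k\downarrow M$ is unnecessary.

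\textbf{Dependence of $A$ on $\widehat\Phi(1)$.} To get $A=A(\bar c,C_1,M,m)$ you need a lower bound on $\widehat\Phi(1)$ that depends only on $m$ and $M$. Running the argument on $\{0<x_1<L\}$ does not supply it. Estimate \eqref{eq:for_corr} requires $u=0$ on $\partial\Omega\cap B(w,r)$, which fails near the inlet corner, where $u$ may jump to $C_1$ on $\Sigma_0$. Use instead $\widehat\Phi''\le\kappa\widehat\Phi$, which holds because $\lambda\le\Lambda$. Multiplying by $\widehat\Phi'\le0$ and integrating to infinity gives $(\widehat\Phi')^2\le\kappa\widehat\Phi^2$, hence $\widehat\Phi(1)\ge e^{-\sqrt\kappa}$.

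With these repairs, your treatment of $\Sigma_1$ and $\Sigma_L$ is the paper's argument: linear vanishing of $\psi$ at $|z|=M$ is compared against \eqref{eq:for_corr}, and $u\le C_1$ is used away from $|z|=M$.
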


Before we prove our main theorems, we establish the following auxiliary lemma. 

\begin{lemma}
\label{le:supersol}
Suppose that \eqref{eq:ass_drift_super} holds and that there are positive $C^2(E)$-functions $f = f(x_1)$, $g = g(y)$ and $h = h(z) = \bar h(|z|)$ in $E$ such that
$f'' \geq 0$, $D^2 g  \geq 0$, $\Delta h \leq 0$, $\bar h$ is non-increasing. If
\begin{align*}   
   - \frac{\Delta_z h}{h} > \frac{\Phi(x,\hat u, D\hat u)}{ \lambda\, \hat u} +\frac{\Lambda}{\lambda}  \left( \frac{f''}{f}  +  \frac{\Delta_y g}{g}  \right)\quad \text{in} \quad E,
\end{align*}
where $\hat u(x) = f(x_1) g(y) h(z)$, 
then $\hat u$ is a strict classical supersolution of \eqref{eq:main} in $E$. 
\end{lemma}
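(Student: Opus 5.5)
The plan is to verify directly that $F(x,\hat u,D\hat u,D^2\hat u)>0$ in $E$. By \eqref{eq:ass_drift_super}, it suffices to show
\[
\mathcal P_{\lambda,\Lambda}(D\hat u,D^2\hat u)>\Phi(x,\hat u,D\hat u).
\]
Multiplying the hypothesis by $\lambda\hat u>0$, it reads
\[
-\lambda f g\,\Delta_z h-\Lambda\bigl(f''gh+fh\,\Delta_y g\bigr)>\Phi(x,\hat u,D\hat u)\quad\text{in }E .
\]
So it is enough to prove that both branches of \eqref{eq:operatorexpression} are bounded below by the left-hand side
\[
R:=-\lambda f g\,\Delta_z h-\Lambda\bigl(f''gh+fh\,\Delta_y g\bigr).
\]

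\textbf{Step 1: the branch $B_{\lambda,\Lambda}$.} I first compute the diagonal blocks of $X=D^2\hat u$. They are
\[
X_{11}=f''gh\ge0,\qquad X_y=fh\,D^2g\ge0,\qquad \operatorname{Tr}X_z=fg\,\Delta h\le0 .
\]
These signs use positivity of $f,g,h$ and the assumptions $f''\ge0$, $D^2g\ge0$, $\Delta h\le0$. Hence the positive and negative parts are explicit: $X_{11}^-=0$, $X_y^-=0$, $(\operatorname{Tr}X_z)^+=0$ and $(\operatorname{Tr}X_z)^-=-fg\Delta h$. The off-diagonal blocks do not enter $B_{\lambda,\Lambda}$. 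Substituting gives exactly $B_{\lambda,\Lambda}(X)=R$. In particular, when $D\hat u=0$ we are done.

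\textbf{Step 2: the gradient branch when $q=D\hat u\neq0$.} A short computation, using $\operatorname{Tr}X=f''gh+fh\Delta g+fg\Delta h$, gives
\[
-\Lambda\operatorname{Tr}X+(\Lambda-\lambda)\frac{\langle Xq,q\rangle}{|q|^2}-R=(\Lambda-\lambda)\Bigl(\frac{\langle Xq,q\rangle}{|q|^2}-fg\,\Delta h\Bigr).
\]
Since $\Lambda\ge\lambda$, it remains to show the Rayleigh quotient bound
\[
\frac{\langle Xq,q\rangle}{|q|^2}\ge\hat u\,\frac{\Delta h}{h}.
\]
I expect this to be the main obstacle, because $X$ is not block diagonal.

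To handle it, I would write $\hat u=e^{\varphi}$ with $\varphi=\varphi_1+\varphi_2+\varphi_3$, where $\varphi_1=\log f$, $\varphi_2=\log g$, $\varphi_3=\log h$. Then $D^2\hat u=\hat u\,(D^2\varphi+D\varphi\otimes D\varphi)$ and $q=\hat u\,D\varphi$. Here $D^2\varphi$ is block diagonal and $D\varphi=(D\varphi_1,D\varphi_2,D\varphi_3)$ splits into blocks. Therefore
\[
\frac{\langle Xq,q\rangle}{\hat u\,|q|^2}=\sum_{i=1}^3\frac{\langle D^2\varphi_i\,D\varphi_i,D\varphi_i\rangle+|D\varphi_i|^2|D\varphi|^2}{|D\varphi|^2}
\ge\sum_{i=1}^3\frac{\langle D^2\varphi_i\,D\varphi_i,D\varphi_i\rangle+|D\varphi_i|^4}{|D\varphi|^2}.
\]
Each numerator equals $\langle D^2u_i\,D\varphi_i,D\varphi_i\rangle/u_i$ with $u_i\in\{f,g,h\}$. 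For $f$ and $g$ this is nonnegative by convexity.

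For $h$, radial symmetry makes $D\varphi_3$ radial, so the third term equals $\bar h''(|z|)\,|D\varphi_3|^2/(h\,|D\varphi|^2)$. Moreover,
\[
\Delta h=\bar h''+\frac{m-1}{|z|}\bar h'\le\bar h'',
\]
because $\bar h$ is non-increasing. (At $z=0$ the gradient $D\varphi_3$ vanishes, so there is nothing to check there.) Now consider two cases:

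\begin{itemize}
\item If $\bar h''\ge0$, the third term is nonnegative, hence at least $\Delta h/h$, since $\Delta h\le0$.
\item If $\bar h''<0$, then, since $|D\varphi_3|^2\le|D\varphi|^2$, the third term is at least $\bar h''/h\ge\Delta h/h$.
\end{itemize}

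In both cases the quotient is at least $\hat u\,\Delta h/h$. This gives the required bound, so both branches in \eqref{eq:operatorexpression} are $\ge R>\Phi$, and $\hat u$ is a strict classical supersolution.
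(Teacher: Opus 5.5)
Your proof is correct and takes essentially the paper's approach. The $B_{\lambda,\Lambda}$ branch is the same sign bookkeeping, and your logarithmic rewriting of $\langle Xq,q\rangle/|q|^2$ repackages the paper's six-term expansion of $\Delta_\infty\hat u$: you discard the same mixed terms and bound the radial $z$-term through $\bar h''\ge\Delta_z h$ and $|D_z\hat u|\le|D\hat u|$. (The case split on the sign of $\bar h''$ is unnecessary if, as in the paper, you first replace $\bar h''$ by $\Delta_z h$ and only then use $\Delta_z h\le 0$.)
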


\noindent
\begin{proof}
We will prove that the two inequalities
\begin{align}\label{eq:helplemmaeq1}
   -& \Lambda\left( \left(D^2\hat u\right)_{11}^+ + \text{Tr}\left(D^2 \hat u\right)^+_y  +\left(\text{Tr}\left(D^2 \hat u\right)_z\right)^+ \right) \notag\\
&+ \lambda\left( \left(D^2\hat u\right)_{11}^- + \text{Tr}\left(D^2 \hat u\right)^-_y +\left(\text{Tr}\left(D^2 \hat u\right)_z\right)^- \right)  \notag\\
&\geq  -\Lambda   \left(f'' g h + f \Delta_y g h \right) -  \lambda f g \Delta_z h, 
\end{align}
and, whenever $|D\hat u| \neq 0$,  
\begin{align}\label{eq:helplemmaeq2}
- \Lambda \operatorname{Tr} D^2\hat u + (\Lambda-\lambda)
 \frac{\langle D^2\hat u D\hat u, D\hat u\rangle}{|D\hat u|^2}
 \geq  -\Lambda   \left(f'' g h + f \Delta_y g h \right) -  \lambda f g \Delta_z h
\end{align}
hold in $E$. 
The result then follows by \eqref{eq:ass_drift_super}, \eqref{eq:operatorexpression} and the inequality in the lemma because 
\begin{align*}
F(x,\hat u,D\hat u,D^2\hat u) &\geq \mathcal P_{\lambda,\Lambda}(D\hat u,D^2\hat u) - \Phi(x,\hat u, D\hat u)  \\
&\geq  -\Lambda   \left(f'' g h + f \Delta_y g h \right) -  \lambda f g \Delta_z h  - \Phi(x,\hat u, D\hat u) > 0,
\end{align*}
and thus $\hat u$ is a strict classical supersolution of \eqref{eq:main} in $E$.
By sign assumptions, we have
$$
\left(D^2 \hat u\right)^+_{11} = f'' g h \quad \text{and} \quad 
\left(D^2 \hat u\right)^-_{11} = 0,
$$
and since $D^2 g  \geq 0$ it also follows that
$$
\text{Tr}\left(D^2 \hat u\right)^+_y =  f \Delta g_y h \quad \text{and} \quad \text{Tr}\left(D^2 \hat u\right)^-_y = 0.
$$
Moreover, 
$$
\left(\text{Tr}\left(D^2 \hat u\right)_z\right)^+ = f g \left(\Delta_z h\right)^+ = 0 \quad \text{and} \quad \left(\text{Tr}\left(D^2 \hat u\right)_z\right)^- = -f g \Delta_z h.
$$
Insertion of these equalities in \eqref{eq:helplemmaeq1} establishes its inequality.
To prove \eqref{eq:helplemmaeq2} we first claim that 
\begin{align}\label{eq:claimforlemma}
\frac{\Delta_\infty\hat u}{|D\hat u|^2}
\geq %\Delta_z\hat u=
fg\,\Delta_z h \quad\text{in}\quad E,
\end{align}
in which $\Delta_\infty\hat u=\langle D^2\hat uD\hat u,D\hat u\rangle$, giving
\begin{align*}
-& \Lambda \operatorname{Tr} D^2\hat u + (\Lambda-\lambda)
 \frac{\langle D^2\hat u D\hat u, D\hat u\rangle}{|D\hat u|^2}
=
-\Lambda\Delta\hat u + (\Lambda-\lambda) \frac{\Delta_\infty\hat u}{|D\hat u|^2}\notag\\
&\geq 
-\Lambda\bigl(f''gh + fh\,\Delta_y g + fg\,\Delta_z h\bigr) + (\Lambda-\lambda)fg\,\Delta_z h\\
& =
 -\Lambda   \left(f'' g h + f \Delta_y g h \right) -  \lambda f g \Delta_z h,
\end{align*}
which proves \eqref{eq:helplemmaeq2}. 
To establish claim \eqref{eq:claimforlemma}, we write
\begin{align*}
\Delta_\infty \hat u
={}&\hat u_{x_1}^2\hat u_{x_1x_1}
+\sum_{i,j=1}^{\ell}\hat u_{y_i} \hat u_{y_j}\hat u_{y_i y_j}
+\sum_{i,j=1}^{m}\hat u_{z_i}\hat u_{z_j}\hat u_{z_i z_j}\\
&+2\sum_{i=1}^{\ell}\hat u_{x_1}\hat u_{y_i}\hat u_{x_1y_i}
+2\sum_{j=1}^{m}\hat u_{x_1}\hat u_{z_j}\hat u_{x_1z_j} 
+2\sum_{i=1}^{\ell}\sum_{j=1}^{m}
  \hat u_{y_i}\hat u_{z_j}\hat u_{y_i z_j}\\
={}&f''(f')^2g^3h^3
+f^3h^3\langle D_y^2g\,D_yg,D_yg\rangle
+f^3g^3\langle D_z^2h\,D_zh,D_zh\rangle\\
&+2f(f')^2gh^3|D_yg|^2
+2f(f')^2g^3h|D_zh|^2
+2f^3gh|D_yg|^2|D_zh|^2.
\end{align*}
From our choice of $f,g$ and $h$, 
we see that $f''(f')^2g^3h^3 \geq 0$, 
$\langle D_y^2g\,D_yg,D_yg\rangle  \geq 0$, 
and that the last three terms are also non-negative.
Since $h$ is radial, $D_z h$ is an eigenvector of $D_z^2h$
with eigenvalue $\bar h''$. Thus
$$
\Delta_\infty \hat u \geq f^3g^3\langle D_z^2h\,D_zh,D_zh\rangle = f^3g^3\bar h'' (\bar h')^2,
$$
and
$\Delta_z h = \bar h'' + \frac{m-1}{|z|} \bar h'$, %= - \kappa h$, 
giving, since $\bar h' \leq 0$, that
$$
\bar h'' (\bar h')^2 = \left(\Delta_z h -\frac{m-1}{|z|} \bar h'\right) (\bar h')^2 \geq  (\bar h')^2 \Delta_z h
$$
whenever $|z| \neq 0$ (when $|z| = 0$ both sides of the inequality vanish). 
Hence,
$$
\Delta_\infty \hat u \geq  f^3 g^3  (\bar h')^2 \Delta_z h \geq 
f g |D\hat u|^2 \Delta_z h,
$$
where the last inequality follows since $\Delta_z h \leq 0$ and $|D_z \hat u| \leq |D\hat u|$. 
This proves claim \eqref{eq:claimforlemma} and therefore the proof of Lemma \ref{le:supersol} is complete.
\end{proof}

\bigskip

\noindent
\begin{proof}[Proof of Theorem~\ref{th:exp-decay-pipe}]
We choose $\hat u_L(x) = f(x_1) g(y) h(z)$ where
\begin{align*}
f(x_1) = A \left(\widehat\Phi(x_1) -\alpha_L\left(\frac{x_1}{L}\right)^2 + \frac{x_1}{L} \right) \quad \text{and} \quad
g(y) = 1 + \epsilon_L \sum_{i = 1}^l y_i^2,
\end{align*}
for some small $0 < \alpha_L, \epsilon_L$ to be chosen later, 
and observe that then the inequality in Lemma \ref{le:supersol} is implied in $\Omega \cap\{\mathbb{R}^n : 0 < x_1 < L\}$ by
\begin{align}\label{eq:johejsan1}   
   - \frac{\Delta_z h}{h} \geq \frac{\Phi(x,\hat u_L, D\hat u_L)}{ \lambda\, \hat u_L} +\frac{\Lambda}{\lambda}  \left( \frac{ \widehat\Phi'' - \frac{2 \alpha_L}{L^2}}{\widehat \Phi}  +  2 l \epsilon_L  \right).
\end{align}

\noindent
To find the `best' $h = h(z) = \bar h(|z|)$ we solve for positive solutions of
\begin{align}\label{eq:MM+}
\Delta_z h(z) + \kappa^+ \,h(z) = 0\quad \text{in} \quad \{z\in \mathbb{R}^m : |z| < M^+\} \quad\text{with}\quad \bar h(M^+) = 0,%\quad \text{on} \quad |z| = M^+, 
\end{align}
in which $M^+ \geq M$ will be specified later. 
The well known solutions are 
\begin{align*}%\label{eq:besselsolution}
h(z)=c |z|^{-(m-2)/2}
J_{(m-2)/2}\!\left(\sqrt{\kappa^+}|z|\right),
\end{align*}
where $c$ is chosen so that \(\bar h(0)=1\), 
and $J_\nu$ denotes the Bessel function of the first kind of order $\nu$.
The solution $h$ is $C^{\infty}$ and the boundary conditions and positivity imply that $\kappa^+$ is given by \eqref{eq:kappa} but with $M$ replaced by $M^+$.

For each \(L>0\), choose \(\alpha_L>0\) sufficiently small that
\(f''\geq0\) on \(0<x_1<L\); this is possible since
\(\widehat\Phi''>0\) on \([0,L]\). We then choose \(\epsilon_L>0\)
so that
\begin{align}\label{eq:jojo}
    \frac{ \widehat\Phi''(x_1) - \frac{2\alpha_L}{L^2}}{\widehat \Phi(x_1)}  +  2 l \epsilon_L  \leq 
      \frac{ \widehat\Phi''(x_1)}{\widehat \Phi(x_1)},
\end{align}
and since \(\widehat\Phi\) is positive, nonincreasing, and satisfies
\(\widehat\Phi(0)=1\), this inequality is equivalent to
\(2lL^2\epsilon_L\leq 2\alpha_L/\widehat\Phi(x_1)\). Thus, when
\(\ell>0\), we may take \(\epsilon_L=\alpha_L/(\ell L^2)\). 
When \(\ell=0\), we set \(\epsilon_L=0\).
Therefore, by \eqref{eq:jojo}, \eqref{eq:johejsan1} and our choice of $h$ it is enough to prove
\begin{align}\label{eq:johejsan}   
   \kappa^+ \geq \frac{\Phi(x,\hat u_L, D\hat u_L)}{ \lambda\, \hat u_L} +\frac{\Lambda}{\lambda}  \frac{ \widehat\Phi''(x_1)}{\widehat \Phi(x_1)}.
\end{align}
%

%\noindent
We now handle the lower-order terms.  
Take $A = A(\bar \epsilon, C, m, M) \geq \frac{2 C}{\bar \epsilon \bar h(M)}$ and observe that then
\begin{align}\label{eq:choice_source}
\frac{\Phi(x,\hat u_L, D\hat u_L)}{ \lambda\, \hat u_L} \leq \frac{C}{A \bar h(M)} \leq \frac{\bar \epsilon}2,
\end{align}
and thus \eqref{eq:johejsan} is implied by
\begin{align*}
\frac{\lambda(x_1)}{\Lambda(x_1)}\left(\kappa^+ - \frac{\bar \epsilon}2 \right) \geq \frac{\widehat\Phi''(x_1)}{\widehat \Phi(x_1)}. 
\end{align*}
Now we can, by taking $M^+$ close enough to $M$, ensure that $\kappa^+ - \bar\epsilon/2 \geq \kappa -\bar\epsilon$ so that it is enough to prove
\begin{align}\label{eq:ineq_theorem}
\frac{\lambda(x_1)}{\Lambda(x_1)} \left(\kappa - {\bar \epsilon} \right) \geq    \frac{\widehat\Phi''(x_1)}{\widehat \Phi(x_1)}, \quad \text{where} \quad \kappa = \left(\frac{j_{\frac{(m-2)}{2}, 1} }{M}\right)^2.
\end{align}
This is the inequality in the theorem, and we have now proved that it implies 
the inequality in Lemma \ref{le:supersol}. 
This gives us a family of strict classical supersolutions $\hat u_L$ of \eqref{eq:main} in $\Omega \cap\{\mathbb{R}^n : 0 < x_1 < L\}$, whenever $L > 0$.

We now perform the comparison argument, and to do so we add a boundary in the `open' $y$-directions if $\ell > 0$:  
For each $L > 0$ we construct the domain
\begin{align*}
&\Omega_L = \Omega \cap \{\mathbb{R}^n : 0 < x_1 < L, |y|^2 < \omega_L(x_1), |z| < M\}, \notag\\ %\quad 
&\text{where} \quad  
\omega_L(x_1) = A f(x_1)^{-1} \epsilon_L^{-1},
\end{align*}
and claim that for $A$ large enough, independent of $L$, 
\begin{align}\label{eq:bchatu}
    \hat u_L \geq C_1 \quad \text{on}\quad \partial \Omega_L \setminus \Gamma^+.%\Sigma_0 \cup \Gamma^+ \cup \Sigma_L. 
\end{align}
We assume that claim \eqref{eq:bchatu} holds for the moment 
and prove it later. 
Observe that $\omega_L \to \infty$ uniformly as $L \to \infty$, and therefore $\Omega_L \to \Omega$ in the limit $L \to \infty$.  
Moreover, since $u \leq C_1$ on $\overline \Omega$, $u\leq 0$ on $\Gamma^+$ by assumption, 
and $\hat u_L \geq 0$ on $\overline \Omega_L$ by construction, 
claim \eqref{eq:bchatu} and Lemma \ref{le:comp-weak-some-sence} allow us to compare $u$ with $\hat u_L$ for any $L > 0$. 
In particular, we conclude 
\begin{align*}
    \hat u_L \geq u \quad \text{on} \quad \partial\Omega_L \quad \text{giving}\quad  \hat u_L \geq u \quad \text{in} \quad \Omega_L.
\end{align*}
Now, since $L>0$ is arbitrary, $\Omega_L \to \Omega$ as $L \to \infty$, we have
\begin{align*}
u (x) \leq \inf_{L>x_1} \hat u_L(x) \leq  A \widehat \Phi(x_1) \quad \text{whenever}\quad x \in \Omega,
\end{align*}
which is the desired estimate.
 
To finish the proof, it only remains to establish the claim \eqref{eq:bchatu}. 
To do so, observe that 
\begin{align*}
  \left(\partial \Omega_L \setminus \Gamma^+\right) \subset \left(\Sigma_0 \cup  \Sigma_L \cup \{\mathbb{R}^n : |y|^2 = \omega_L\}\right).  
\end{align*}
Since $g \geq  1$ and $\bar h(M) \leq h$ on $\overline \Omega_L$, we have
\begin{align*}
\hat u_L(x) \geq \bar h(M) f(x_1) = A \bar h(M)\left(\widehat\Phi(x_1) - \alpha_L\left(\frac{x_1}{L}\right)^2 + \frac{x_1}{L} \right)  \quad \text{whenever} \quad  x \in \overline \Omega_L.  
\end{align*}
This gives $\hat u_L \geq A \bar h(M)/2 \geq C_1$ on $\Sigma_0 \cup \Sigma_L$ if $A$ is large enough and if we assume, as we may, 
that $\alpha_L \leq \frac{1}{2}$. 
In particular, we now take $A$ according to
\begin{align}\label{eq:choice_boundary}
A = A(\bar \epsilon, C, C_1, m, M) \geq \frac{2}{\bar h(M)} \max\left\{ C_1, \frac{C}{\bar \epsilon}\right\}.
\end{align}
It remains to ensure that $\hat u_L \geq C_1$ on $\{\mathbb{R}^n : |y|^2 = \omega_L\}$, but this follows by construction since on the set 
\begin{align*}
 |y|^2  = \omega_L(x_1) =  A f(x_1)^{-1} \epsilon_L^{-1} \quad \text{for}\quad x_1 \in  (0,L), 
\end{align*}
it holds that
\begin{align*}
\hat u_L \geq \bar h(M) f(x_1) \left(1 + \epsilon_L |y|^2\right) =  \bar h(M) f(x_1) \left(1 + \frac{A}{f(x_1)}\right) \geq C_1 \geq u. %\quad \text{on} \quad  |z| \leq M.  
\end{align*}
The proof of Theorem \ref{th:exp-decay-pipe} is complete.
\end{proof}

\bigskip

\noindent
\begin{proof}[Proof of Corollary \ref{cor:exp-decay-pipe}]
Following the proof of Theorem \ref{th:exp-decay-pipe} but with $\Phi\equiv0$ we see that we can take $\bar \epsilon = 0$ and  ignore \eqref{eq:choice_source}. 
Therefore, we can take $M^+ = M$ and thereby obtain the desired family of strict supersolutions whenever \eqref{eq:ineq_theorem} holds with $\bar\epsilon = 0$.

We next update the choice of $A$ in \eqref{eq:choice_boundary}, 
and to do so we assume in the following that $\Omega$ is a maximal $(m,M)$-thin domain since any smaller allowed domain $\Omega$ makes the situation easier. 
This time we have $\bar h(M) = 0$, i.e. the barrier vanishes on the boundary where $|z| = M$. 
The subsolution $u$ may jump immediately from 0 to $C_1$ on $\Sigma_0$, 
making it impossible to dominate by $\hat u_L$ on that set. 
Therefore, we replace it by $\Sigma_1$ and make comparison in the corresponding slightly smaller set. 
To dominate the subsolution $u$ near $|z| = M$ on $\Sigma_1$ and $\Sigma_L$ we need assumption \eqref{eq:for_corr} and the fact that the Bessel function has nonvanishing gradient near its first zero. 
In particular, there exist $\delta = \delta(m,M) > 0$ and $\hat c = \hat c(m,M) > 0$ such that
\begin{align}\label{eq:besselgradient}
    |J_\nu'(r)| \geq \hat c > 0
    \quad \text{whenever} \quad
    r \in (j_{\nu,1}-\delta,\, j_{\nu,1}+\delta).
\end{align}
By \eqref{eq:besselgradient}, assumption \eqref{eq:for_corr}, $u \leq C_1$, 
and by the construction of $\hat u_L$ it follows that there exists 
$A = A(\bar c, C_1, m, M)$ such that 
\begin{align}\label{eq:corollaryboundaryextra}
\hat u_L = f(x_1) g(y) h(z) \geq A\left( \widehat\Phi(x_1) -\alpha_L\left(\frac{x_1}{L}\right)^2 + \frac{x_1}{L} \right) h(z) \geq u \quad \text{on} \quad \Sigma_1 \cup \Sigma_L.
\end{align}
Indeed, to prove \eqref{eq:corollaryboundaryextra} on $\Sigma_1$ we bound $\widehat\Phi(1)$ from below, depending only on $m,M$: 
Since $\widehat\Phi$ is positive and convex, 
we have $\widehat\Phi'\le0$ and
$\widehat\Phi'\to0$. Multiplying
$\widehat\Phi''\le\kappa\widehat\Phi$ by $\widehat\Phi'\le0$
and integrating from $x_1$ to infinity gives
\[
    (\widehat\Phi'(x_1))^2\le\kappa\,\widehat\Phi(x_1)^2.
\]
Thus $\widehat\Phi'/\widehat\Phi\ge-\sqrt{\kappa}$,
and integration over $[0,1]$, using $\widehat\Phi(0)=1$, yields
\[
    \widehat\Phi(1)\ge e^{-\sqrt{\kappa}}.
\]
To ensure $\hat u_L \geq u$ on the part of the boundary of $\Omega_L$ where $|y|^2  = \omega_L(x_1)$ we note that on the set
\begin{align*}
 |y|^2  = \omega_L(x_1) =  A f(x_1)^{-1} \epsilon_L^{-1} \quad \text{for}\quad x_1 \in  (0,L), 
\end{align*}
it holds that
\begin{align*}
\hat u_L = f(x_1) \left(1 + \epsilon_L |y|^2\right)  h(z) = f(x_1)  \left(1 + \frac{A}{f(x_1)}\right)h(z) \geq A h(z) \geq u. %\quad \text{on} \quad  |z| \leq M.  
\end{align*}

To finish the proof it only remains to clarify that the replacement of $\Sigma_0$ with $\Sigma_1$ does not affect the result.
Indeed, now we only obtain
$
u \leq A\widehat\Phi(x_1)
$
for $x_1 \geq 1$.
%But, as we have already ensured the validity of taking $A = A(\bar c, C_1, m, M)$ so that $\hat u_L(1,0,0) =  f(1) g(0) h(0) \geq C_1$, the conclusion also holds for $0\leq x_1 < 1$ since $\widehat\Phi$, and thus also $f$, are non-increasing. 
However, by increasing $A$ if necessary so that
$A\geq C_1e^{\sqrt{\kappa}}$, we obtain $u(x)\leq C_1\leq A\widehat\Phi(1)
\leq A\widehat\Phi(x_1)$ for $0<x_1<1$. 
This completes the proof. 
\end{proof}

\bigskip

%%%%%%%%%%%%%%%%%%%%%%%%%%%%%%%%%%%%%%%%%%%%%%%%%%%%%%%%%%%%%%%%%%%%%%%%%%%%
%%%%%%%%%%%%%%%%%%%%%%%%%%%%%%%%%%%%%%%%%%%%%%%%%%%%%%%%%%%%%%%%%%%%%%%%%%%%
%%%%%%%%%%%%%%%%%%%%%%%%%%%%%%%%%%%%%%%%%%%%%%%%%%%%%%%%%%%%%%%%%%%%%%%%%%%%
%%%%%%%%%%%%%%%%%%%%%%%%%%%%%%%%%%%%%%%%%%%%%%%%%%%%%%%%%%%%%%%%%%%%%%%%%%%%
%%%%%%%%%%%%%%%%%%%%%%%%%%%%%%%%%%%%%%%%%%%%%%%%%%%%%%%%%%%%%%%%%%%%%%%%%%%%
%%%%%%%%%%%%%%%%%%%%%%%%%%%%%%%%%%%%%%%%%%%%%%%%%%%%%%%%%%%%%%%%%%%%%%%%%%%%

\section{Sharpness, examples and applications}
\label{sec:sharp}

\setcounter{theorem}{0}
\setcounter{equation}{0}

We first show that the one-dimensional condition in our main result is, in general, sharp. We then present an example involving vanishing ellipticity, discuss the effect of lower-order terms, and conclude by applying our results to $p$-subharmonic functions.

\subsection*{Sharpness}

Let $\Omega$ be a maximal $(m,M)$-thin domain, $0 < \lambda(x_1) \leq \Lambda(x_1)$ and consider the linear equation
\begin{equation} \label{eq:sharp-linear}
    -\Lambda(x_1)u_{x_1x_1}
    -\lambda(x_1)\Delta_{(y,z)} u=0
    \quad\text{in}\quad \Omega,
\end{equation}
which clearly belongs to the class of elliptic equations considered.
Assume a solution of the form
\begin{align}\label{eq:sharp}
u(x_1,z)=\widehat\Phi(x_1)h(z)
\end{align}
and observe that then the PDE separates to
\begin{align*}
    \frac{\Lambda(x_1)}{\lambda(x_1)} \frac{\widehat\Phi''(x_1)}{\widehat\Phi(x_1)} 
    = -\frac{\Delta_z h(z)}{h(z)} 
    = \kappa.
\end{align*}
Therefore, letting $h$ be as in \eqref{eq:MM+} but with $M^+ = M$ and $\kappa^+ = \kappa$, 
we conclude that $u$ is an exact solution of \eqref{eq:sharp-linear} if and only if
\begin{equation}\label{eq:sharp-ode}
    \frac{\widehat\Phi''(x_1)}
         {\widehat\Phi(x_1)}
    =
    \kappa \, \frac{\lambda(x_1)}{\Lambda(x_1)}.    
\end{equation}
Hence, whenever \eqref{eq:sharp-ode} admits a positive solution
$\widehat\Phi$ tending to zero as $x_1\to\infty$, 
the estimate in Corollary \ref{cor:exp-decay-pipe} is attained by the explicit solution \eqref{eq:sharp}, of equation \eqref{eq:sharp-linear} which belongs to the class
of PDEs considered. 
In this sense, the one-dimensional condition,
including explicit dependence on the transverse geometry via $\kappa$, 
and on ellipticity $\lambda/\Lambda$, cannot be improved.

\subsubsection*{Uniform ellipticity}

When $0<\lambda\leq\Lambda$ are constants 
\eqref{eq:sharp-ode} becomes
\[
    \widehat\Phi''
    =
    \kappa\,\frac{\lambda}{\Lambda}
    \widehat\Phi,
\]
and the corresponding positive decaying exact solution of 
\eqref{eq:sharp-linear} is 
\begin{align*}
u(x_1,z) = e^{-x_1\sqrt{\kappa\,\frac{\lambda}{\Lambda}}} h(z).
\end{align*}
This shows that the exponential decay rate
\[
    \sqrt{\kappa\,\frac{\lambda}{\Lambda}}
\]
is optimal. 
For the Laplace equation we have $\lambda=\Lambda=1$ and  
\begin{align*}
    u(x_1,z)
    = 
     e^{-\sqrt{\kappa}\,x_1} h(z)
     =
    e^{-\sqrt{\kappa}\,x_1}
    |z|^{-\frac{m-2}{2}}
    J_{\frac{m-2}{2}}
    \left(\sqrt{\kappa}\,|z|\right),
\end{align*}
so that the optimal exponential decay rate is $\sqrt{\kappa}$.

%\subsection{Examples and applications}
%\label{sec:examples}

\subsection*{Vanishing ellipticity}

Suppose that ellipticity vanishes according to
\begin{align*}
\frac{\lambda(x_1)}{\Lambda(x_1)}
    =
    \frac{\hat c}{(1+x_1)^a},
    \quad 0 < a, \quad 0 < \hat c \leq 1.   
\end{align*}
We shall see that the decay estimates will, in this case, 
show the following three different regimes, 
\begin{align}\label{eq:casesindegenerateellipticity}
u(x) \leq c
\begin{cases}
(1+x_1)^{a/4}
\exp\!\left(
-\dfrac{2\sqrt{\hat c\, \kappa}}{2-a}
(1+x_1)^{1-a/2}
\right),
& 0<a<2, \\[3mm]
(1+x_1)^{\frac{1-\sqrt{1+4\hat c\,\kappa}}{2}},
& a=2, \\[3mm]
1,
& a>2.
\end{cases}
\end{align}
Thus, stretched-exponential decay is obtained for $0<a<2$, the
borderline case $a=2$ gives power-law decay, 
while for $a>2$ we obtain no decay estimate. 
To derive these estimates from Corollary \ref{cor:exp-decay-pipe}, we proceed as follows.

%\subsubsection*{The case $0<a<2$}

\bigskip

\noindent
{\bf The case $0<a<2$}: Now
\begin{align*}
\widehat{\Phi}(x_1) = \sqrt{1+x_1}\, K_{\frac{1}{2-a}} \left( \frac{2\sqrt{\hat c \,\kappa}}{2-a} (1+x_1)^{1-a/2} \right),  
\end{align*}
where $K_\nu$ denotes the modified Bessel function of the second kind of order \(\nu\), 
solves 
\begin{align}\label{eq:ode-a-hej}
\frac{\widehat{\Phi}''}{\widehat\Phi} = \frac{\hat c\, \kappa}{(1+x_1)^a}. 
\end{align}
Using the standard asymptotic relation 
\[ 
K_\nu(z) \sim \sqrt{\frac{\pi}{2z}}\,e^{-z} \quad \text{as} \quad z\to\infty, 
\] 
we obtain the conclusion
\begin{align*}
u(x) \leq
c (1+x_1)^{a/4} \exp\left( -\frac{2\sqrt{\hat c\, \kappa}}{2-a} (1+x_1)^{1-a/2} \right) \quad\text{whenever}\quad x \in\Omega, 
\end{align*}
for some constant $c>0$. 
Hence, for $0<a<2$, the sharp decay is of stretched exponential type.

%\subsubsection*{The borderline case $a=2$}

\bigskip

\noindent
{\bf The borderline case $a=2$}: It is easy to verify that 
\begin{align*}
 \widehat\Phi(x_1) =  (1+x_1)^{-\frac{\sqrt{1+4\hat c\kappa}-1}{2}}
\end{align*}
is an exact positive solution tending to zero in this case. 
Consequently, the borderline case $a=2$ produces power-law decay.

%\subsubsection*{The case $a>2$}
\bigskip

\noindent
{\bf The case $a>2$}: Finally, assume, by way of contradiction, 
that \eqref{eq:ode-a-hej} admits a positive solution $\widehat\Phi$ satisfying
\[
    \widehat\Phi(x_1)\to0
    \quad\text{as}\quad x_1\to\infty.
\]
Since $\widehat\Phi''>0$, the function $\widehat\Phi$ is convex.
Positivity together with convergence to zero implies that
$\widehat\Phi$ is decreasing and
$\widehat\Phi'(x_1)\to0$ as $x_1\to\infty$. Integrating
\eqref{eq:ode-a-hej} from $x_1$ to infinity gives
\[
    -\widehat\Phi'(x_1)
    =
    \hat c \, \kappa
    \int_{x_1}^{\infty}
    \frac{\widehat\Phi(t)}{(1+t)^a}\,dt.
\]
Integrating once more yields
\[
    \widehat\Phi(x_1)
    =
    \hat c\, \kappa
    \int_{x_1}^{\infty}
    (t-x_1)
    \frac{\widehat\Phi(t)}{(1+t)^a}\,dt.
\]
Since $\widehat\Phi(t)\leq \widehat\Phi(x_1)$ for $t\geq x_1$ we have
\[
    1
    \leq
    \hat c\kappa
    \int_{x_1}^{\infty}
    \frac{t-x_1}{(1+t)^a}\,dt.
\]
For $a>2$,
\[
    \int_{x_1}^{\infty}
    \frac{t-x_1}{(1+t)^a}\,dt
    \leq
    \int_{x_1}^{\infty}t^{1-a}\,dt
    =
    \frac{x_1^{2-a}}{a-2}
    \rightarrow0 \quad \text{as} \quad x_1\to\infty.
\]
This is a contradiction. Thus, when $a>2$ there exists no positive solution $\widehat\Phi$ that satisfies $\widehat\Phi(x)\to 0$ as $x\to\infty$. 
We have now derived the estimates in \eqref{eq:casesindegenerateellipticity}.

As a final comment, we observe that when ellipticity vanishes at infinity, then the boundary estimates in \cite{LOT24,LOS25} may blow up ($\bar c \to \infty$ as $\lambda/\Lambda \to 0$) at infinity as well. Therefore, to ensure the assumptions in Corollary \ref{cor:exp-decay-pipe} one needs to establish \eqref{eq:for_corr} in an alternative way. 
Otherwise, without boundary estimate \eqref{eq:for_corr}, 
we have to rely on Theorem \ref{th:exp-decay-pipe} instead of Corollary \ref{cor:exp-decay-pipe}, resulting in the fact that the estimates 
are still valid, but with $\kappa$ replaced by $\kappa -\bar \epsilon$.

\subsection*{Equations with lower-order terms}

If the lower-order terms in the equation satisfy
$$
\Phi(x,s,p) \leq C\lambda(x_1)\widehat\Phi(x_1) \quad \text{whenever} \quad (x,r,p) \in \Omega\times \mathbb{R} \times \mathbb{R}^n,
$$
Theorem \ref{th:exp-decay-pipe} implies that as long as 
\begin{align}\label{eq:lowerorderode}
\sup_{x_1 > 0}\,\frac{\widehat\Phi''(x_1)}{\widehat\Phi (x_1)} \frac{\Lambda(x_1)}{\lambda(x_1)} < \kappa,  
\end{align}
then there exists a constant $A > 0$ such that 
$u(x) \leq A\, \widehat \Phi(x_1)$ whenever $x \in \Omega$. 
Thus, when $\lambda$ is constant the subsolution $u$ obeys the same decay estimate as $\Phi(x,s,p)$ in this case. 
However, if the bound $\widehat\Phi$ on the lower-order terms is better so that 
\begin{align*}
\frac{\widehat\Phi''(x_1)}{\widehat\Phi (x_1)} \geq  \kappa \, \frac{\lambda(x_1)}{\Lambda(x_1)}, 
\end{align*}
% 
%i.e. the lower-order terms decay faster than, 
%or equally fast as the optimal estimate without lower-order terms ($\widehat \Phi \equiv 0$), 
then one has to adjust $\widehat \Phi$ until it satisfies \eqref{eq:lowerorderode} (and therefore the inequality in Theorem \ref{th:exp-decay-pipe} for some $\bar\epsilon>0$) in order to obtain $u(x) \leq A\, \widehat \Phi(x_1)$. 
In such case, one may take $\bar \epsilon$ arbitrary small with the cost of $A(\bar\epsilon) \to \infty$ as $\bar\epsilon \to 0$.

As an example, suppose $\Lambda/\lambda$ is constant and that $u$ is a subsolution of 
\begin{align*}
     - \frac{\Lambda}{\lambda} u_{x_1 x_1} - \Delta_{y,z} u - Ce^{-a x_1} = 0, \quad C > 0, \quad a > 0,
\end{align*}
in an $(m,M)$-thin domain $\Omega$ satisfying $u \leq C_1$ on $\Omega$ and $u \leq 0$ on $\Gamma^+$. 
Then Theorem \ref{th:exp-decay-pipe} yields the following implications:
\begin{align*}
    a^2 <  \kappa \,\frac{\lambda}{\Lambda}
    \quad \implies \quad  
    u(x) \leq A\, e^{-a\,x_1},
\end{align*}
i.e. $u$ decays equally fast as the source term, and
\begin{align*}
    a^2 \geq \kappa\,\frac{\lambda}{\Lambda} 
     \quad \implies \quad 
     u(x) \leq A(\bar \epsilon)\, e^{-\sqrt{\frac{\lambda}{\Lambda}(\kappa-\bar\epsilon)}\,x_1}.
\end{align*}
%

%%%%%%%%%%%%%%%%%%%%%%%%%%%%%%%%%%%%%%%%%%%%%%%%%%%%%%%%%%%%%%%%%%%%%%%%%%%%
%%%%%%%%%%%%%%%%%%%%%%%%%%%%%%%%%%%%%%%%%%%%%%%%%%%%%%%%%%%%%%%%%%%%%%%%%%%%
%%%%%%%%%%%%%%%%%%%%%%%%%%%%%%%%%%%%%%%%%%%%%%%%%%%%%%%%%%%%%%%%%%%%%%%%%%%%
%%%%%%%%%%%%%%%%%%%%%%%%%%%%%%%%%%%%%%%%%%%%%%%%%%%%%%%%%%%%%%%%%%%%%%%%%%%%

\subsection*{The $p$-Laplace equation}

%\bigskip

%\noindent
%{\bf $p$-Laplace equation.}

As an application, we obtain the following estimate.

\begin{corollary}
Let $\Omega$ and $\kappa$ be as in Theorem \ref{th:exp-decay-pipe}. 
Assume $p\in(1,2]$ and that $u$ is a $p$-subharmonic function in $\Omega$, 
satisfying $u \leq C_1$ on $\overline \Omega$ and $u \leq 0$ on $\Gamma^+$. 
Then there exists $A = A(p, n, C_1, m, M) > 0$ such that 
\begin{align*}
 u(x) \leq A\,  e^{-x_1 \sqrt{\kappa\, (p-1)}}
\end{align*}
whenever $x \in  \Omega$.
\end{corollary}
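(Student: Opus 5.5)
We apply Theorem \ref{th:exp-decay-pipe} with $\Phi\equiv 0$. The first task is to write the $p$-Laplace equation in the form \eqref{eq:main} with the growth condition \eqref{eq:ass_drift_super}, where $\lambda=p-1$ and $\Lambda=1$. For $Du\neq 0$ the normalized $p$-Laplacian is
\[
-\Delta u-(p-2)\frac{\langle D^2u\,Du,Du\rangle}{|Du|^2}
=-\operatorname{Tr}\Big(\big(I_n-(1-(p-1))\tfrac{Du\otimes Du}{|Du|^2}\big)D^2u\Big).
\]
Since $1<p\le 2$, we have $0<p-1\le 1$. Hence this is exactly the second branch in \eqref{eq:operatorexpression} with $\Lambda=1$, $\lambda=p-1$, in the limit $\eta\to 0$.

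We therefore take $F(x,r,q,X)=-\operatorname{Tr}X-(p-2)\langle Xq,q\rangle/|q|^2$ for $q\ne0$, interpreted for $q=0$ through the usual semicontinuous envelopes as in \cite{JJ12}. The operator $\mathcal P_{p-1,1}(q,X)$ is a minimum over a set containing the matrix that defines this operator, so $F\ge \mathcal P_{p-1,1}$ whenever $q\neq0$. Monotonicity in $u$ and $X$ is clear. By \cite{JJ12}, a $p$-subharmonic function is a viscosity subsolution of this normalized equation. More precisely, what we need is the comparison in Lemma \ref{le:comp-weak-some-sence} against the strict classical supersolutions $\hat u=f g h$ produced in the proof of Theorem \ref{th:exp-decay-pipe}, with $f$ convex and decreasing plus a linear term. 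At a touching point where $D\hat u\neq 0$, the test-function definition from \cite{JJ12} gives $-\Delta_p\hat u\le 0$ there, which contradicts \eqref{eq:helplemmaeq2} after dividing by $|D\hat u|^{p-2}$.

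The main obstacle is the set where $D\hat u=0$. For the $p$-Laplacian with $p<2$ the viscosity definition imposes no condition at critical points of the test function. I will handle this in one of two ways. The first option is to check that $D\hat u$ does not vanish in the comparison region: $\partial_{x_1}\hat u = f' g h$ vanishes only where $f'=0$, and $D_z h$ vanishes only at $z=0$. The second option is to use the fact that, in \cite{JJ12}, test functions with isolated critical points require $D^2\phi(x_0)=0$ or similar. I expect the cleanest route is to choose $\alpha_L$ so that $f'<0$ on $(0,L)$. This is possible since $\widehat\Phi'<0$ strictly for $\widehat\Phi=e^{-x_1\sqrt{\kappa(p-1)}}$, and the term $x_1/L$ contributes only $1/L$. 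I will therefore replace the linear perturbation by a smaller multiple, $\delta_L x_1/L$ with $\delta_L$ small, so that $f'<0$ on $[0,L]$ while still $f(L)\ge c>0$. If that fails, the fallback is to approximate with the regularized ($\eta>0$) branch of $\mathcal A$.

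Finally, let $\widehat\Phi(x_1)=e^{-\beta x_1}$ with $\beta^2<\kappa(p-1)$. This function is positive, decreasing, strictly convex, satisfies $\widehat\Phi(0)=1$, and $\widehat\Phi''/\widehat\Phi=\beta^2\le(\kappa-\bar\epsilon)(p-1)$ for suitable $\bar\epsilon$. Theorem \ref{th:exp-decay-pipe} then gives $u\le A e^{-\beta x_1}$. To reach the exact exponent $\sqrt{\kappa(p-1)}$ we need $\bar\epsilon=0$. Since $\Phi\equiv0$, the proof of Corollary \ref{cor:exp-decay-pipe} allows $M^+=M$ provided the barrier dominates $u$ on $\Sigma_1\cup\Sigma_L$. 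Without the estimate \eqref{eq:for_corr}, I would instead take $M^+>M$ in a fixed way: the barrier only needs $\kappa^+\ge\kappa$ strictly in \eqref{eq:johejsan} when $\Phi=0$, so $M^+=M$ already works with equality in the non-strict version. The remaining difficulty is strictness, which I will obtain from the $-2\alpha_L/L^2$ term. Since $\bar h(M)=0$, comparison near $|z|=M$ still needs a boundary estimate. For $p$-subharmonic functions in the convex maximal domain, \cite{LOT24,LOS25} supply \eqref{eq:for_corr} with $\bar c=\bar c(p,n)$, so Corollary \ref{cor:exp-decay-pipe} applies and yields $A=A(p,n,C_1,m,M)$.
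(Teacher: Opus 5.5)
Your reduction matches the paper's: $\lambda=p-1$, $\Lambda=1$, $\widehat\Phi(x_1)=e^{-x_1\sqrt{\kappa(p-1)}}$, and Corollary~\ref{cor:exp-decay-pipe} with a boundary estimate \eqref{eq:for_corr} in the convex maximal domain. (The detour through Theorem~\ref{th:exp-decay-pipe} with $\beta^2<\kappa(p-1)$ is unnecessary.) However, the step you yourself flag as the main obstacle, comparison at critical points of the barrier, is not closed.

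Your preferred fix, making $f'<0$ on $[0,L]$, cannot work. On $\Sigma_L$ you only know $u\le C_1$, so you need $f(L)\ge c$ with $c>0$ independent of $L$. If $f$ were decreasing on $[0,L]$, this would give $f\ge c$ on all of $[0,L]$, hence $\hat u_L\ge c\,h$ in $\Omega_L$, and $\inf_L\hat u_L$ would yield no decay at all. Concretely, with $\widehat\Phi=e^{-\beta x_1}$, $f''\ge0$ forces $\alpha_L\le\tfrac{L^2}{2}\widehat\Phi''(L)$, and $f'(L)<0$ forces $\delta_L<2\alpha_L+L|\widehat\Phi'(L)|$, so $f(L)/A=\widehat\Phi(L)-\alpha_L+\delta_L\to0$. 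Any usable $f$ therefore decreases and then increases. For large $L$ there is $x_1^*\in(1,L)$ with $f'(x_1^*)=0$, and since also $D_yg(0)=0$ and $D_zh(0)=0$, $D\hat u_L$ vanishes at $x^*=(x_1^*,0,0)\in\Omega_L$. If $u-\hat u_L$ attains its maximum there, the viscosity test for $p$-subharmonic functions with $1<p<2$ gives no information, and the argument of Lemma~\ref{le:comp-weak-some-sence} breaks down.

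Your other options do not close this either. The second option does not apply, since $D^2\hat u_L(x^*)=\mathrm{diag}\bigl(f''(x_1^*),\,2\epsilon_Lf(x_1^*)I_\ell,\,-\tfrac{\kappa}{m}f(x_1^*)I_m\bigr)\neq0$. The $\eta$-regularized fallback would require $u$ to be a subsolution of the regularized operator, which does not follow from $p$-subharmonicity: for $p<2$ the regularized operator is larger than the normalized one wherever $\langle Xq,q\rangle<0$.

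The missing idea is to leave the viscosity framework at the comparison step, as the paper does. Since $\hat u_L\in C^2$ and, by \eqref{eq:helplemmaeq2}, $-\Delta_p\hat u_L=|D\hat u_L|^{p-2}\bigl(-\widetilde\Delta_p\hat u_L\bigr)>0$ wherever $D\hat u_L\neq0$, $\hat u_L$ is a bounded viscosity supersolution of the $p$-Laplace equation (points where the gradient vanishes need not be tested). By \cite[Theorem 4.1]{JJ12} it is then a weak supersolution. The weak comparison principle for $p$-sub- and supersolutions gives $u\le\hat u_L$ in $\Omega_L$ regardless of the critical point, and strictness is not even needed.

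Equivalently, you could check by hand that the isolated critical point is removable in the weak formulation. The field $|D\hat u_L|^{p-2}D\hat u_L$ is continuous and $O(|x-x^*|^{p-1})$. Integrating by parts outside $B(x^*,r)$ against $0\le\varphi\in C_c^\infty(\Omega_L)$ leaves a nonnegative bulk term and a boundary flux of size $O(r^{n+p-2})\to0$.
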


\noindent
\begin{proof}
Set \(\lambda=p-1\) and \(\Lambda=1\). For \(q\neq0\),
the second term in the minimum defining
\(\mathcal P_{p-1,1}(q,X)\) is
\begin{align*}
-\operatorname{Tr}X + (2-p)\frac{\langle Xq,q\rangle}{|q|^2} = -\widetilde\Delta_p(q,X),
\end{align*}
in which the normalized $p$-Laplace operator $\widetilde\Delta_p u$ is defined via
$$
\Delta_pu
=\operatorname{div}\!\left(|Du|^{p-2}Du\right)
=|Du|^{p-2}\left(
\Delta u+(p-2)\frac{\langle D^2u\,Du,Du\rangle}{|Du|^2}
\right)
=|Du|^{p-2}\widetilde\Delta_pu.
$$
Thus, our class of PDEs covers the normalized $p$-Laplace equation. 
Moreover, from \cite[Theorem 4.1]{JJ12} it follows that any bounded viscosity supersolution of the $p$-Laplace equation, $1 < p \leq 2$, is also a weak supersolution to the same equation, and in the definition of viscosity solutions, one can omit testing at points where the gradient vanishes.
This, together with a well known comparison principle for weak sub- and supersolutions of the $p$-Laplace equation, ensures that Corollary \ref{cor:exp-decay-pipe} applies once we ensure the boundary estimate in \eqref{eq:for_corr}.
Indeed, such boundary growth rate is well known for $p$-subharmonic functions with $\bar c$ depending only on $p,n$, when the domain is convex (such as a  maximal $(m,M)$-thin domain) see \cite{AKSZ07, J15}. 
An application of Corollary \ref{cor:exp-decay-pipe} with $\Lambda/\lambda$ constant now gives
\begin{align*}
 u(x) \leq A\,  e^{-\sqrt{\frac{\lambda}{\Lambda}\kappa}\,x_1} 
 =  A e^{-x_1 \sqrt{(p-1) \kappa}}
\end{align*}
whenever $x \in  \Omega$, which is the desired estimate.
\end{proof}

%%%%%%%%%%%%%%%%%%%%%%%%%%%%%%%%%%%%%%%%%%%%%%%%%%%%%%%%%%%%%%%%%%%%%%%%%%%%
%%%%%%%%%%%%%%%%%%%%%%%%%%%%%%%%%%%%%%%%%%%%%%%%%%%%%%%%%%%%%%%%%%%%%%%%%%%%
%%%%%%%%%%%%%%%%%%%%%%%%%%%%%%%%%%%%%%%%%%%%%%%%%%%%%%%%%%%%%%%%%%%%%%%%%%%%
%%%%%%%%%%%%%%%%%%%%%%%%%%%%%%%%%%%%%%%%%%%%%%%%%%%%%%%%%%%%%%%%%%%%%%%%%%%%
%%%%%%%%%%%%%%%%%%%%%%%%%%%%%%%%%%%%%%%%%%%%%%%%%%%%%%%%%%%%%%%%%%%%%%%%%%%%
%%%%%%%%%%%%%%%%%%%%%%%%%%%%%%%%%%%%%%%%%%%%%%%%%%%%%%%%%%%%%%%%%%%%%%%%%%%%


\begin{thebibliography}{A}

\bibitem{AKSZ07}
{\sc Aikawa H., Kilpeläinen T., Shanmugalingam N., Zhong X.,}
\emph{Boundary Harnack principle for p-harmonic functions in smooth Euclidean domains},
Potential Analysis 26.3 (2007), 281--301.

\bibitem{All99}
{\sc Allaire G., Amar M.,}
\emph{Boundary layer tails in periodic homogenization},
ESAIM: Control, Optimisation and Calculus of Variations 4 (1999), 209--243.

%ej citerad
%\bibitem{AV24}
%{\sc Arrieta J. M., Villanueva-Pesqueira M.,}
%\emph{Elliptic and parabolic problems in thin domains with doubly weak oscillatory boundary},
%arXiv preprint arXiv:2405.05607 (2024).

\bibitem{BFMW25}
{\sc Bomba E., Fabricius J., Manjate S., Wall P.,}
\emph{Pressure-driven flow in thin straight tubes of non-uniform cross-section},
ZAMM -- Journal of Applied Mathematics and Mechanics / Zeitschrift für Angewandte Mathematik und Mechanik 105 (2025).

\bibitem{SR86}
{\sc Breuer S., Roseman J. J.,}
\emph{Phragmén-Lindelöf decay theorems for classes of nonlinear Dirichlet problems in a circular cylinder},
Journal of Mathematical Analysis and Applications 113.1 (1986), 59--77.

%ej citerad, kan vara relevant för Growth2 artikeln
%\bibitem{CGR10}
%{\sc Cappiello M., Gramchev T., Rodino L.,}
%\emph{Entire extensions and exponential decay for semilinear elliptic equations},
%Journal d'Analyse Mathématique 111.1 (2010), 339--367.

% ej citerad
%\bibitem{CDV07} {\sc Capuzzo Dolcetta I., Vitolo A.,}
%\emph{A qualitative Phragm\'en-Lindel\"of theorem for fully nonlinear elliptic equations}, J. Differential Equations 243 %(2007), 578--592.

\bibitem{CDV19} {\sc Capuzzo Dolcetta I., Vitolo A.,}
\emph{Directional ellipticity on special domains: Weak Maximum and Phragm\'en-Lindel\"of principles}, Nonlinear Analysis 184 (2019), 69--82.

\bibitem{C07}
{\sc Casado-Díaz J.,}
\emph{Exponential decay for the solutions of nonlinear elliptic systems posed in unbounded cylinders},
Journal of Mathematical Analysis and Applications 328.1 (2007), 151--169.

%ej citerad, kan vara relevant för Growth2 artikeln
%\bibitem{CDM04}
%{\sc Cavalcanti M. M., Domingos Cavalcanti V. N., Ma T. F.,}
%\emph{Exponential decay of the viscoelastic Euler-Bernoulli equation with a nonlocal dissipation in general domains},
%(2004), 495--510.

\bibitem{CR02}
{\sc Chipot M., Rougirel A.,}
\emph{On the asymptotic behaviour of the solution of elliptic problems in cylindrical domains becoming unbounded},
Communications in Contemporary Mathematics 4.01 (2002), 15--44.

\bibitem{CIL92}
{\sc Crandall M. C., Ishii H., Lions P.-L.,}
\emph{User's guide to viscosity solutions of second order partial differential equations},
Bulletin of the American Mathematical Society 27.1 (1992), 1--67.

\bibitem{DNG12}
{\sc Delitsyn A. L., Nguyen B. T., Grebenkov D. S.,}
\emph{Exponential decay of Laplacian eigenfunctions in domains with branches of variable cross-sectional profiles},
The European Physical Journal B 85.11 (2012), 371.

\bibitem{G11}
\sc{Galdi G. P.,}
\emph{An Introduction to the Mathematical Model Theory of the Navier–Stokes Equations}, 2nd edition,
Springer, New York (2011).

\bibitem{Hol13}
{\sc Holmes M. H.,}
\emph{Introduction to perturbation methods},
2nd ed., Texts in Applied Mathematics 20, Springer, New York (2013).

\bibitem{J15}
{\sc Julin V.,}
\emph{Generalized Harnack inequality for nonhomogeneous elliptic equations},
Archive for Rational Mechanics and Analysis 216 (2015), 673--702.

\bibitem{JJ12}
{\sc Julin V., Juutinen P.,}
\emph{A new proof for the equivalence of weak and viscosity solutions for the p-Laplace equation},
Communications in Partial Differential Equations 37.5 (2012), 934--946.

\bibitem{Know66}
{\sc Knowles J. K.,}
\emph{On Saint-Venant's principle in the two-dimensional linear theory of elasticity},
Archive for Rational Mechanics and Analysis 21 (1966), 1--22.

\bibitem{L57}
{\sc Lax P. D.,}
\emph{A Phragmén-Lindelöf theorem in harmonic analysis and its application to some questions in the theory of elliptic equations},
Communications on Pure and Applied Mathematics 10.3 (1957), 361--389.

%ej citerad, kan vara relevant för Growth2 artikeln
%\bibitem{L24}
%{\sc Le Balch K.,}
%\emph{On the exponential decay for real-valued solutions to elliptic equations with singular potentials in the plane},
%Journées Équations aux Dérivées Partielles (2024), 1--14.

\bibitem{L22}
{\sc Lundström N. L. P.,}
\emph{Growth of subsolutions to fully nonlinear equations in halfspaces},
Journal of Differential Equations 320 (2022), 143--173.

\bibitem{LOS25}
{\sc Lundström N. L. P., Olofsson M., Singh J.,}
\emph{Estimates for viscosity solutions of fully nonlinear equations near smooth boundaries},
arXiv preprint arXiv:2506.12477 (2025).

\bibitem{LOT24}
{\sc Lundström N. L. P., Olofsson M., Toivanen O.,}
\emph{Strong maximum principle and boundary estimates for nonhomogeneous elliptic equations},
Potential Analysis 60 (2024), 425--443.

\bibitem{Ole80}
{\sc Oleinik O. A., Iosif'yan G. A.,}
\emph{Behavior at infinity of the solutions of the Neumann problem for a second-order elliptic equation in an unbounded domain},
Uspekhi Matematicheskikh Nauk 35.4 (1980), 197--198.

\bibitem{bok_spec_func}
{\sc Olver F. W. J.,}
\emph{NIST handbook of mathematical functions},
Cambridge University Press, Cambridge (2010).

\bibitem{Pan98}
{\sc Panasenko G. P.,}
\emph{Asymptotic expansion of the solution of Navier-Stokes equation in a tube structure},
C. R. Acad. Sci. Paris, Série IIb 326 (1998), 867--872.

\bibitem{Pan24}
{\sc Panasenko G. P., Pileckas K.,}
\emph{Multiscale analysis of viscous flows in thin tube structures},
Advances in Mathematical Fluid Mechanics, Birkhäuser/Springer, Cham (2024).

\bibitem{Toup65}
{\sc Toupin R. A.,}
\emph{Saint-Venant's principle},
Archive for Rational Mechanics and Analysis 18 (1965), 83--96.

\end{thebibliography}
\end{document}